\DeclareMathOperator{\Lie}{Lie}
\DeclareMathOperator{\cl}{cl}
\DeclareMathOperator{\Spec}{Spec}
\DeclareMathOperator{\Char}{char}
  \renewenvironment{thebibliography}[1]{
    \begin{oldthebibliography}{#1}
      \setlength{\parskip}{0ex}
      \setlength{\itemsep}{0ex}
  }
  {
    \end{oldthebibliography}
  }
\begin{document}

\newcounter{rownum}
\setcounter{rownum}{0}
\newcommand{\ab}{\addtocounter{rownum}{1}\arabic{rownum}}

\newcommand{\x}{$\times$}
\newcommand{\bb}{\mathbf}

\newcommand{\Ind}{\mathrm{Ind}}
\newcommand{\R}{\mathrm{R}}
\newcommand{\RR}{\mathscr{R}}
\newcommand{\G}{\mathscr{G}}
\newcommand{\hra}{\hookrightarrow}
\newcommand{\sss}{\mathrm{ss}}
\newtheorem{lemma}{Lemma}[section]
\newtheorem{theorem}[lemma]{Theorem}
\newtheorem*{TA}{Theorem A}
\newtheorem*{TB}{Theorem B}
\newtheorem*{TC}{Theorem C}
\newtheorem*{CorC}{Corollary C}
\newtheorem*{TD}{Theorem D}
\newtheorem*{TE}{Theorem E}
\newtheorem*{PF}{Proposition E}
\newtheorem*{C3}{Corollary 3}
\newtheorem*{T4}{Theorem 4}
\newtheorem*{C5}{Corollary 5}
\newtheorem*{C6}{Corollary 6}
\newtheorem*{C7}{Corollary 7}
\newtheorem*{C8}{Corollary 8}
\newtheorem*{claim}{Claim}
\newtheorem{cor}[lemma]{Corollary}
\newtheorem{conjecture}[lemma]{Conjecture}
\newtheorem{prop}[lemma]{Proposition}
\newtheorem{question}[lemma]{Question}
\theoremstyle{definition}
\newtheorem{example}[lemma]{Example}
\newtheorem{examples}[lemma]{Examples}
\newtheorem{algorithm}[lemma]{Algorithm}
\newtheorem*{algorithm*}{Algorithm}
\theoremstyle{remark}
\newtheorem{remark}[lemma]{Remark}
\newtheorem{remarks}[lemma]{Remarks}
\newtheorem{obs}[lemma]{Observation}
\theoremstyle{definition}
\newtheorem{defn}[lemma]{Definition}

  \def\hal{\unskip\nobreak\hfil\penalty50\hskip10pt\hbox{}\nobreak
  \hfill\vrule height 5pt width 6pt depth 1pt\par\vskip 2mm}

\renewcommand{\labelenumi}{(\roman{enumi})}
\newcommand{\Hom}{\mathrm{Hom}}
\newcommand{\Int}{\mathrm{int}}
\newcommand{\Ext}{\mathrm{Ext}}
\newcommand{\opH}{\mathrm{H}}
\newcommand{\D}{\mathscr{D}}
\newcommand{\soc}{\mathrm{Soc}}
\newcommand{\SO}{\mathrm{SO}}
\newcommand{\Sp}{\mathrm{Sp}}
\newcommand{\SL}{\mathrm{SL}}
\newcommand{\GL}{\mathrm{GL}}
\newcommand{\PGL}{\mathrm{PGL}}
\newcommand{\OO}{\mathcal{O}}
\newcommand{\Y}{\mathbf{Y}}
\newcommand{\X}{\mathbf{X}}
\newcommand{\diag}{\mathrm{diag}}
\newcommand{\End}{\mathrm{End}}
\newcommand{\tr}{\mathrm{tr}}
\newcommand{\Stab}{\mathrm{Stab}}
\newcommand{\red}{\mathrm{red}}
\newcommand{\Aut}{\mathrm{Aut}}
\renewcommand{\H}{\mathcal{H}}
\renewcommand{\u}{\mathfrak{u}}
\newcommand{\Ad}{\mathrm{Ad}}
\newcommand{\N}{\mathcal{N}}
\newcommand{\Z}{\mathbb{Z}}
\newcommand{\la}{\langle}\newcommand{\ra}{\rangle}
\newcommand{\gl}{\mathfrak{gl}}
\newcommand{\g}{\mathfrak{g}}
\newcommand{\F}{\mathbb{F}}
\newcommand{\m}{\mathfrak{m}}
\renewcommand{\b}{\mathfrak{b}}
\newcommand{\p}{\mathfrak{p}}
\newcommand{\q}{\mathfrak{q}}
\renewcommand{\l}{\mathfrak{l}}
\newcommand{\del}{\partial}
\newcommand{\h}{\mathfrak{h}}
\renewcommand{\t}{\mathfrak{t}}
\renewcommand{\k}{\mathfrak{k}}
\newcommand{\Gm}{\mathbb{G}_m}
\renewcommand{\c}{\mathfrak{c}}
\renewcommand{\r}{\mathfrak{r}}
\newcommand{\n}{\mathfrak{n}}
\newcommand{\s}{\mathfrak{s}}
\newcommand{\Q}{\mathbb{Q}}
\newcommand{\z}{\mathfrak{z}}
\newcommand{\pso}{\mathfrak{pso}}
\newcommand{\so}{\mathfrak{so}}
\renewcommand{\sl}{\mathfrak{sl}}
\newcommand{\psl}{\mathfrak{psl}}
\renewcommand{\sp}{\mathfrak{sp}}
\newcommand{\Ga}{\mathbb{G}_a}
\newcommand{\barB}{\overline{B}}
\newcommand{\barm}{\overline{\mathfrak{m}}}

\newenvironment{changemargin}[1]{%
  \begin{list}{}{%
    \setlength{\topsep}{0pt}%
    \setlength{\topmargin}{#1}%
    \setlength{\listparindent}{\parindent}%
    \setlength{\itemindent}{\parindent}%
    \setlength{\parsep}{\parskip}%
  }%
  \item[]}{\end{list}}

\parindent=0pt
\addtolength{\parskip}{0.5\baselineskip}

\subjclass[2010]{20G15}
\title{On unipotent radicals of pseudo-reductive groups}

\author[M.\  Bate]{Michael Bate}
\address
{Department of Mathematics,
University of York,
York YO10 5DD,
United Kingdom}
\email{michael.bate@york.ac.uk}
\author[B.\ Martin]{Benjamin Martin}
\address
{Department of Mathematics,
University of Aberdeen,
King's College,
Fraser Noble Building,
Aberdeen AB24 3UE,
United Kingdom}
\email{b.martin@abdn.ac.uk}
\author[G. R\"ohrle]{Gerhard R\"ohrle}
\address
{Fakult\"at f\"ur Mathematik,
Ruhr-Universit\"at Bochum,
D-44780 Bochum, Germany}
\email{gerhard.roehrle@rub.de}
\author{David I. Stewart}
\address{School of Mathematics and Statistics,
Herschel Building,
Newcastle,
NE1 7RU, UK}
\email{david.stewart@ncl.ac.uk}

\pagestyle{plain}
\begin{abstract}
We establish some results on the structure of the geometric unipotent radicals of pseudo-reductive $k$-groups. In particular, our main theorem gives bounds on the nilpotency class of geometric unipotent radicals of standard pseudo-reductive groups, which are sharp in many cases. 
A major part of the proof rests upon consideration of the following situation: let $k'$ be a purely inseparable field extension of $k$ of degree $p^e$ and let $G$ denote the Weil restriction of scalars $\R_{k'/k}(G')$ of a reductive $k'$-group $G'$. When $G= \R_{k'/k}(G')$ we also provide some results on the orders of elements of the unipotent radical $\RR_u(G_{\bar k})$ of the extension of scalars of $G$ to the algebraic closure $\bar k$ of $k$.
\end{abstract}
\maketitle
\section{Introduction}
\label{sec:intro}

Let $G$ be a smooth affine algebraic $k$-group over an arbitrary field $k$. Then $G$ is said to be \emph{pseudo-reductive} if $G$ is connected and the largest $k$-defined connected smooth normal unipotent subgroup $\RR_{u,k}(G)$ of $G$ is trivial. J. Tits introduced pseudo-reductive groups to the literature some time ago in a series of courses at the Coll\`ege de France (\!\!\cite{Tits9192} and \cite{Tits9293}), but they have resurfaced rather dramatically in recent years thanks to the monograph \cite{CGP15}, many of whose results were used in B.~Conrad's proof of the finiteness of the Tate--Shafarevich sets and Tamagawa numbers of arbitrary linear algebraic groups over global function fields \cite[Thm.\ 1.3.3]{Con12}. The main result of that monograph is \cite[Thm.~5.1.1]{CGP15} which says that unless one is in some special situation over a field of  characteristic $2$ or $3$, then any pseudo-reductive group is \emph{standard}. This means it arises after a process of modification of a Cartan subgroup of a certain Weil restriction of scalars of a given reductive group (we assume reductive groups are connected). More specifically, a standard pseudo-reductive group $G$ can be expressed as a quotient group of the form \[G= (\R_{k'/k}(G')\rtimes C)/\R_{k'/k}(T')\] corresponding to a $4$-tuple $(G',k'/k,T',C)$, where $k'$ is a non-zero finite reduced $k$-algebra, $G'$ is a $k'$-group with reductive fibres over $\Spec k'$, $T'$ is a maximal $k'$-torus of $G'$ and $C$ is a commutative pseudo-reductive $k$-group occurring in a factorisation
\[\R_{k'/k}(T')\stackrel{\phi}{\to}C\stackrel{\psi}{\to}\R_{k'/k}(T'/Z_{G'})\tag{$*$}\]
of the natural map $\varpi:\R_{k'/k}(T')\to \R_{k'/k}(T'/Z_{G'})$.
Here $Z_{G'}$ is the (scheme-theoretic) centre of $G'$\footnote{Note that $\varpi$ is {\em not} surjective when $Z_{G'}$ has non-\'etale fibre at a factor field $k'_i$ of $k'$ that is not separable over $k$.} and $C$ acts on $\R_{k'/k}(G')$ via $\psi$ followed by the functor $\R_{k'/k}$ applied to the conjugation action of $T'/Z_{G'}$ on $G'$; we regard $\R_{k'/k}(T')$ as a central subgroup of $\R_{k'/k}(G')\rtimes C$ via the map $h\mapsto (i(h)^{-1}, \phi(h))$, where $i$ is the natural inclusion of $\R_{k'/k}(T')$ in $\R_{k'/k}(G')$ .

The structure of general connected linear algebraic groups over perfect fields $k$ is well-understood: the geometric unipotent radical $\RR_u(G_{\bar k})=\RR_{u,\bar k}(G_{\bar k})$ descends to a subgroup $\RR_u(G)$ of $G$, and the quotient $G^\red:=G/\RR_u(G)$ is reductive. If one further insists $k$ be separably closed one even has that $G^\red$ is split, so $G^\red$ is the central product of its semisimple derived group $\D(G)$ and a central torus, with $G^\red/Z_G$ semisimple---in fact, the direct product of its simple factors.

Most of this theory goes wrong over imperfect fields $k$, hence in particular the need to consider pseudo-reductive groups, whose geometric unipotent radicals may not be defined over $k$. In order to understand the structure of a given smooth affine algebraic group $G$ over $k$ it is therefore instructive to extend scalars to the (perfect) algebraically closed field $\bar k$ and analyse the structure of $G_{\bar k}$, where, for example, one sees the full unipotent radical. We pursue this approach in this paper and discuss the structure of $G_{\bar k}$ where $G$ is a standard pseudo-reductive group arising from a $4$-tuple $(G',k'/k,T',C)$. The reductive part $G_{\bar k}^\red=G_{\bar k}/\RR_u(G_{\bar k})$ is not especially interesting in that the universal property of Weil restriction implies that $G_{\bar k}^\red$ has the same root system as $G'$. Further results \cite[Thm.~3.4.6, Cor.~A.5.16]{CGP15} even furnish us, under some restrictive conditions, with a Levi subgroup for $G$: a smooth subgroup $H$ such that $H_{\bar k}$ is a complement to the geometric unipotent radical $\RR_u(G_{\bar k})$. However, the precise structure of $\RR_u(G_{\bar k})$ is rather more mysterious. While one knows that there is a composition series of $\RR_u(G_{\bar k})$ whose composition factors are related to the adjoint $G'_{\bar k}$-module $\g'=\Lie(G'_{\bar k})$ (see Lemma~\ref{lem:mofo}), it is unclear what the structure of $\RR_u(G_{\bar k})$ is \emph{qua} group.  Since $\RR_u(G_{\bar k})$ is a $p$-group, one may consider some standard invariants, which measure the order of its elements and the extent to which it is non-abelian. One major purpose of this paper is to show that as soon as the root system associated to $G$ is non-trivial and $k'/k$ is finite and inseparable, then $\RR_u(G_{\bar k})$ is almost always non-abelian (in a way that depends on the characteristic of the field amongst other things).

To state our results, let $(G',k'/k,T',C)$ be a $4$-tuple giving rise to a pseudo-reductive group $G$ and denote the separable closure of $k$ by $k_s$. Then $k_s'=k'\otimes_k {k_s}$ is a finite non-zero reduced $k_s$-algebra, isomorphic to a product of factor fields $\prod_i k_i$, where each $k_i/k_s$ is a purely inseparable extension. Let $G_i'$ (resp., $T_i$) denote the fibre of $G'_{k_s'}$ (resp., $T'_{k_s}$) over $k_i$. By standard results, $\R_{k'/k}(G')_{k_s}\cong\R_{k_s'/k_s}(G')$ (see Lemma \ref{reducetosplit}). 
If \[p=2 \quad\text{and}\quad G_i'\cong \SL_2^r\times S',\] for $S'$ a torus and $r>0$, we say $G_i'$ is \emph{unusual}. We define a number $\ell_i$ depending on $G_i'$ and $k_i$. First, if $G_i'$ is commutative, then set $\ell_i=1$. Now, for each field $k_i$, form the $k_s$-algebra $B_i=k_i\otimes_{k_s} k_i$. Each $B_i$ is a local $k_i$-algebra, which by virtue of the pure inseparability of $k_i/k_s$ means that its maximal ideal, $\m_i$ say, consists of nilpotent elements. Hence there is a minimal $n$ such that $\m_i^n=0$; in case $G_i'$ is not unusual, let $\ell_i:=n-1$. In case $G_i'$ is unusual, then define $\ell_i$ to be the minimal $n$ such that
$({}^2\m_i)^{n-1}\cdot \m_i^2=0$
where $({}^2\m_i)$ is the ideal of squares in $\m_i$. Finally  define the integer \[N:={\max}_i\left\{\ell_i\right\}.\]

\begin{theorem}\label{maintheorem}Let $G$ be a non-commutative standard pseudo-reductive group over a field $k$ of characteristic $p$, arising from a $4$-tuple $(G',k'/k,T',C)$ with $k'$ a non-zero finite reduced $k$-algebra. Let the $G_i'$ (resp., the $T_i'$) be the fibres of $G'$ (resp., $T'$) over the factor fields of $k_s$, as above.  Suppose that for every unusual fibre $G_i'$, the map $\phi$ in the factorisation ($*$) is an injection on restriction to $T'_i\cap G_i'$. Then the nilpotency class of $\RR_u(G_{\bar k})$ is $N$.
\end{theorem}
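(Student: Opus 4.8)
The plan is to reduce everything to the case of a single purely inseparable extension $k_i/k_s$ and a single reductive fibre $G_i'$, and then to compute the nilpotency class of the unipotent radical of $\R_{k_i/k_s}(G_i')_{\bar k}$ directly. By Lemma~\ref{reducetosplit} we may extend scalars to $k_s$, so that $G$ becomes a standard pseudo-reductive group over a separably closed field arising from the product decomposition $k_s'\cong\prod_i k_i$; since nilpotency class is insensitive to the further extension to $\bar k$ and a direct product of groups has nilpotency class equal to the maximum of the factors, it suffices to treat each index $i$ separately. Here one needs to check that the modification by $C$ in the factorisation ($*$) does not affect the geometric unipotent radical: $C$ is commutative pseudo-reductive, hence has trivial geometric unipotent radical beyond what is already visible, and the quotient by the central $\R_{k'/k}(T')$ only removes a (multiplicative-type) central piece. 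This should already be essentially contained in the structure theory quoted before Lemma~\ref{lem:mofo}, so that $\RR_u(G_{\bar k})\cong\prod_i\RR_u\big(\R_{k_i/k_s}(G_i')_{\bar k}\big)$ as groups, at least up to something central that cannot increase the class.

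Next I would set $k''=k_i$, $k=k_s$, $G'=G_i'$, and analyse $U:=\RR_u\big(\R_{k''/k}(G')_{\bar k}\big)$ using the standard exact sequence: $\R_{k''/k}(G')_{\bar k}$ sits inside $\prod_{\sigma}(G')^{\sigma}$-type products after base change, and $U$ is the kernel of the natural map $\R_{k''/k}(G')_{\bar k}\to G'_{\bar k}$. Concretely, if $\mathfrak m$ is the maximal ideal of $B=k''\otimes_k k''$ (equivalently, after base change, the nilpotent augmentation ideal governing the "higher jets"), then $U$ is filtered by the powers $\mathfrak m^j$, with $\mathrm{gr}$ pieces isomorphic to copies of the adjoint module $\mathfrak g'$ as in Lemma~\ref{lem:mofo}. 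The commutator in $U$ is then controlled by the Lie bracket on $\mathfrak g'$ tensored with the multiplication $\mathfrak m^a\times\mathfrak m^b\to\mathfrak m^{a+b}$ on the algebra side, together with the Campbell--Hausdorff / truncated-exponential corrections coming from the group law on the unipotent part of $G'$. The key computation is that iterated commutators of length $c$ land in the $\mathfrak m^{\,c-1}$-level roughly speaking, so the class is governed by the largest $n$ with $\mathfrak m^{n-1}\neq 0$, i.e. $n-1=\ell_i$ in the non-unusual case. For the upper bound one shows iterated commutators of length $N+1$ vanish because the relevant product of ideal powers is zero; for the lower bound one exhibits an explicit nonzero iterated commutator of length $N$ using a pair of root subgroups whose Lie-algebra bracket is a nonzero coroot direction (this is where the non-commutativity and the non-triviality of the root system are used), lifted against nonzero elements $x\in\mathfrak m$, $x^{n-1}\neq0$.

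The unusual case $p=2$, $G_i'\cong\SL_2^r\times S'$ is the genuinely delicate point and where I expect the main obstacle to lie. In characteristic $2$ for $\SL_2$ the bracket $[\,e,\,f\,]$ lands in the span of the coroot $h$, but the square of a root-group element $u_\alpha(x)$ is $u_\alpha(x^2)$ rather than being trivial, so the group-law corrections feed back through the "squares" ideal $({}^2\mathfrak m)$ rather than through $\mathfrak m$ alone; this is exactly why $\ell_i$ is defined via $({}^2\mathfrak m)^{n-1}\cdot\mathfrak m^2=0$. I would handle this by working in the truncated coordinate ring explicitly: write a general element of $U$ in terms of the three one-parameter subgroups of $\SL_2$ with coordinates in $\mathfrak m$, compute the commutator of two such elements using the explicit $\SL_2$ relations (including $\alpha^\vee$-valued terms and the squaring map), and track which powers of $\mathfrak m$ versus $({}^2\mathfrak m)$ appear at each commutator depth. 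The hypothesis that $\phi$ is injective on $T'_i\cap G_i'$ is precisely what guarantees that the coroot directions produced by these brackets survive in the quotient defining $G$ and are not collapsed by the modification; without it the lower bound could fail, and indeed the theorem only claims equality under that hypothesis. Assembling the per-$i$ bounds and taking the maximum then yields nilpotency class exactly $N$.
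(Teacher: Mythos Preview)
Your reduction to Weil restrictions is the main gap. You assert that ``$C$ is commutative pseudo-reductive, hence has trivial geometric unipotent radical'' and that ``the quotient by the central $\R_{k'/k}(T')$ only removes a (multiplicative-type) central piece''; both statements are false. A commutative pseudo-reductive group such as $\R_{k'/k}(\Gm)$ typically has a large geometric unipotent radical, and $\R_{k'/k}(T')$ is likewise not of multiplicative type over $\bar k$. In fact the paper exhibits (Remark~\ref{rem:badstd} and Example~\ref{ex:nomofo}) standard presentations for which $\cl(\RR_u(G_{\bar k}))$ is \emph{strictly larger} than $\cl(\RR_u(\R_{k'/k}(G')_{\bar k}))$: taking $G'=\SL_2$ in characteristic~$2$ with $C=\R_{k'/k}(T'/Z_{G'})$ yields $G\cong\R_{k'/k}(\PGL_2)$, whose radical has class $q-1$ while that of $\R_{k'/k}(\SL_2)$ has class $q/2$. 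So the isomorphism $\RR_u(G_{\bar k})\cong\prod_i\RR_u(\R_{k_i/k_s}(G_i')_{\bar k})$ you are relying on simply does not hold, and the modification by $C$ is exactly where the work lies.

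The paper's argument handles this in two different ways according to whether the fibre is unusual. In the unusual case, the injectivity hypothesis on $\phi$ is used not merely to rescue certain coroot directions but to show that the semidirect product $\R_{k'/k}(G')\rtimes C$ is (isomorphic to) a \emph{direct} product, so that $G\cong\R_{k'/k}(G')\times C'$ and one is genuinely back to a Weil restriction. In the non-unusual case, the upper bound is obtained by passing to a \emph{centrally smooth enlargement} $\widehat{G'}$ of $G'$ (Lemmas~\ref{lem:hatty} and \ref{lem:hatty_bound}), which allows one to untwist the semidirect product as in Lemma~\ref{reducetoweilrest}(ii); the lower bound comes from locating a rank-one subgroup $H'\cong\GL_2$ or $\PGL_2$ inside $G'$ and checking that the image of $\R_{k'/k}(H')$ in $G$ (after dividing by a central piece contained in $\R_{k'/k}(Z_{H'})$, which is smooth in these cases) still witnesses the full class via Proposition~\ref{prop:casegl2orpgl2}. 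Your explicit-commutator idea for the Weil restriction itself is close to the paper's Lemma~\ref{gl2case} and Lemma~\ref{lem:sl2p2lem}, but you need the above machinery to transport those bounds to the actual standard group $G$.
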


What the theorem indicates is that the precise structure of $\RR_u(G_{\bar k})$, including its nilpotency class, appears to depend in a very particular way on the nature of the extension $k'/k$ used in Weil restriction, rather than on, for instance, the reductive group $G'$ (provided the reductive group contains at least one fibre that is not unusual or commutative).

\begin{remarks}(i) There is one set of cases we have excluded. After passing to an appropriate direct factor of a fibre of $G'$ over $k'$, this happens when $G$ arises from a $4$-tuple $(G',k'/k,T',C)$, where $G'=\SL_2$.  The kernel of the map $\R_{k'/k}(T')\to \R_{k'/k}(T'/Z_{G'})$ is $\R_{k'/k}(\mu_2)$ (see \cite[Prop.\ 1.3.4]{CGP15}).  For example, let us suppose in the factorisation $\R_{k'/k}(T')\stackrel{\phi}{\to} C\stackrel{\psi}{\to}\R_{k'/k}(T'/Z_{G'})$ that $\ker(\phi)= \R_{k'/k}(\mu_2)$ and $\psi$ is injective.  Then we may regard $C$ as a subgroup of $\R_{k'/k}(T'/Z_{G'})$ sitting in between $\R_{k'/k}(T')/\R_{k'/k}(\mu_2)$ and $\R_{k'/k}(T'/Z_{G'})$.  If $C= \R_{k'/k}(T')/\R_{k'/k}(\mu_2)$ then $G\cong \R_{k'/k}(\SL_2)/\R_{k'/k}(\mu_2)$, and the nilpotency class of the unipotent radical of $G_{\bar k}$ is at most the minimal $n$ such that $({}^2\m_i)^{n-1}\cdot \m_i^2=0$ (see Remark~\ref{rem:weirdquot})---in other words it is bounded above by the nilpotency class of the unipotent radical of $\R_{k'/k}(\SL_2)_{\bar k}$.  If $C= \R_{k'/k}(T'/Z_{G'})$ then  $G\cong \R_{k'/k}(\PGL_2)$ (cf.\ Remark~\ref{rem:badstd}), so the unipotent radical of $G_{\bar k}$ has nilpotency class $n-1$ for the minimal $n$ such that $\m^n=0$. These numbers are usually different. In between these two extremes there is a  range of possibilities: there may be many subgroups $C$ sitting in between $\R_{k'/k}(\SL_2)/\R_{k'/k}(\mu_2)$ and $\R_{k'/k}(T'/Z_{G'})$---note that the quotient $\R_{k'/k}(T'/Z_{G'})/(\R_{k'/k}(\SL_2)/\R_{k'/k}(\mu_2))$ is a smooth commutative unipotent group---and we have, \emph{a priori}, no control over these.

(ii) Given any standard pseudo-reductive group $G$, by \cite[Thm.~4.1.1]{CGP15},  
we may find a \emph{standard presentation} for it; that is, a $4$-tuple $(G',k'/k,T',C)$ giving rise to $G$ through the standard construction such that $G'$ has absolutely simple, simply-connected fibres over $\Spec k'$. In this case, one may simplify the statement of the theorem somewhat, since then a fibre of $G'_i$ of $G'$ is unusual precisely when $(G'_i)_{k_s}\cong \SL_2$ and $p=2$.\end{remarks}

To prove the theorem, we reduce to the case that $G$ is a Weil restriction, but in order to deal with cases where the centre $Z_{G'}$ of $G'$ is not smooth, we show how to present $G$ through the standard construction, starting from a central extension $\widehat{G'}$ of $G'$ such that the centre of $\widehat{G'}$ is smooth (see Section~\ref{subsec:hatty}). This may be of independent interest.

In Section \ref{sec:orders} we consider for $G$ of the form $\R_{k'/k}(G')$ another invariant of $\RR_u(G_{\bar k})$ usually attached to $p$-groups---the \emph{exponent}, i.e., the minimal integer $s$ such that the $p^s$-power map on $\RR_u(G_{\bar k})$ factors through the trivial group. For this invariant, we do not have as precise a result as we do for the nilpotency class, but we establish some upper and lower bounds, and identify a possible form of a precise result, cf.\ Question \ref{opq}.

Finally, let us remark that part of the proof of Theorem~\ref{maintheorem} reduces to some bare-hands calculations with matrices arising from the Weil restriction $\R_{k'/k}(\GL_2)$ and may be of interest to anyone who would like to see some examples of pseudo-reductive groups in an explicit description by matrices.

\section{Notation and Preliminaries}

We follow the notation of \cite{CGP15}. In particular, $k$ is always a field of characteristic $p> 0$.  All algebraic groups are assumed to be affine and of finite type over the ground ring, and all subgroups are closed.  Reductive and pseudo-reductive groups are assumed to be smooth and connected.

Let us recall the definition of Weil restriction and some relevant features. We consider algebraic groups scheme-theoretically, so that an algebraic $k$-group $G$ is a functor $\{k\textrm{-algebras}\to\textrm{groups}\}$ which is representable via a finitely-presented $k$-algebra $k[G]$. Let $k'$ be a non-zero finite reduced $k$-algebra. Then for any smooth $k'$-group $G'$ with connected fibres over $\Spec k'$, the Weil restriction $G=\R_{k'/k}(G')$ is a smooth connected $k$-group of dimension $[k':k]\dim G'$, characterised by the property $G(A)=G'(k'\otimes_k A)$ functorially in $k$-algebras $A$. If $H'$ is a subgroup of $G'$ then $\R_{k'/k}(H')$ is a subgroup of $\R_{k'/k}(G')$.  For a thorough treatment of this, one may see \cite[A.5]{CGP15}. Important for us is the fact that Weil restriction is right adjoint to base change: that is, we have a bijection
\[\Hom_k(M,\R_{k'/k}(G'))\cong \Hom_{k'}(M_{k'},G')\]
natural in the $k$-group scheme $M$. We need two special cases.  First, if $M=G=\R_{k'/k}(G')$ then the identity morphism $G\to G$ corresponds to a map $q_{G'}:G_{k'}\to G'$.  When $k'$ is a finite purely inseparable field extension of $k$ and $G'$ is reductive then by \cite[Thm.\ 1.6.2]{CGP15}, $q_{G'}$ is smooth and surjective, $\ker q_{G'}$ coincides with $\RR_{u,k'}(G_{k'})$ and $\RR_{u,k'}(G_{k'})$ is a descent of $\RR_{u}(G_{\bar k})$ (so $\RR_{u}(G_{\bar k})$ is defined over $k'$); it follows from the naturality of the maps concerned that if $H'$ is a connected reductive subgroup of $G'$ then $\ker q_{H'}\subseteq \ker q_{G'}$, so $\RR_{u,k'}(H_{k'})\subseteq \RR_{u,k'}(G_{k'})$.  In particular, $\dim \RR_{u,k'}(G_{k'})= ([k':k]- 1)\dim G'$.  Second, if $H$ is a reductive $k$-group and $M= G'= H_{k'}$ then the identity morphism $H_{k'}\to H_{k'}$ corresponds to a map $i_H:H\to \R_{k'/k}(H_{k'})$.  When $k'$ is a finite purely inseparable field extension of $k$ then by \cite[Cor.\ A.5.16]{CGP15}, the composition $H_{k'}\stackrel{(i_H)_{k'}}{\longrightarrow} \R_{k'/k}(H_{k'}) \stackrel{q_{H_{k'}}}{\longrightarrow} H_{k'}$ is the identity, so we may regard $H$ as a subgroup of $\R_{k'/k}(H_{k'})$ via $i_H$; in fact, $H$ is a Levi subgroup of $\R_{k'/k}(H_{k'})$.

Let $k'=\prod_{i=1}^n k_i$ be a non-zero finite reduced $k$-algebra with factor fields $k_i$.
Then any algebraic $k'$-group $G'$ decomposes as a product $G' = \prod_{i=1}^n G'_i$
where each $G'_i$ is an algebraic $k_i$-group (it is the fibre of $G'$ over $\Spec k_i$).
In such a situation, we let $\RR_{u,k'}(G')$ denote the unipotent subgroup $\prod_{i=1}^n\RR_{u,k_i}(G_i)$.  
We have $\R_{k'/k}(G')= \prod_{i=1}^n \R_{k_i/k}(G'_i)$; in particular, $\RR_{u,k'}(\R_{k'/k}(G')_{k'})= \prod_{i=1}^n \RR_{u,k'}(\R_{k_i/k}(G'_i)_{k'}$.  This allows us to reduce immediately to the case that $k'$ is a field in the proof of Theorem~\ref{maintheorem}.  

Suppose now that $H$ is a smooth connected algebraic group over $k$ and let $[g,h]=ghg^{-1}h^{-1}$ denote the commutator of the elements $g,h\in H({\bar k})$.  Let $M,K$ be smooth connected subgroups of $H$.  Then the commutator subgroup $[M,K]$ is a smooth connected subgroup of $H$, and we may identify $[M,K]({\bar k})$ with the commutator subgroup $[M({\bar k}),K({\bar k})]$.  Moreover, $[M_{k'},K_{k'}]=([M,K])_{k'}$ for any field extension $k'/k$ (cf.\ \cite[I.2.4]{Bor91}).  In particular, we may form the lower central series $\{\D_m(H)\}_{m\geq 0}$ in the usual way, and $\D_m(H)({\bar k})$ is the $m^{\rm th}$ term in the lower central series for the abstract group $H({\bar k})$.  We say that $H$ is \emph{nilpotent} if there exists some integer $m$ such that $\D_m(H)=1$. The \emph{nilpotency class} $\cl(H)$ of $H$ is the smallest integer $m$ such that $\D_{m}(H)=1$.  The arguments above show that extending the base field does not change the nilpotency class of $H$.

In proving Theorem~\ref{maintheorem} and intermediate results, we usually want to reduce to the case that $k$ is separably closed, guaranteeing that $k_i/k$ is purely inseparable for $k_i$ a factor field of $k'$. This also allows us to assume that the group $G'$ has \emph{split} reductive fibres. 
We denote by $k_s$ the separable closure of $k$ in its algebraic closure $\bar k$, and we set $k_s'=k'\otimes_k k_s$, a non-zero finite reduced $k_s$-algebra.  Note that even when $k'$ is a field, $k_s'$ need not be a field.

\begin{lemma}\label{reducetosplit}Let $G$ be a standard pseudo-reductive group arising from $(G',k'/k,T',C)$. Then 

(i) $G_{k_s}$ is isomorphic to the standard pseudo-reductive group arising from  $(G'_{k_s'},k_s'/k_s,T'_{k_s'},C_{k_s})$.

(ii) $\RR_{u,k'}(G_{k'})_{k_s'}=\RR_{u,k_s'}(G_{k_s'})$.

\end{lemma}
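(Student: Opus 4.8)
The plan is to deduce both parts from the single principle that all the operations entering the standard construction, as well as the formation of $\RR_{u,-}$, commute with separable extension of the ground field. Throughout, abbreviate $K:=k_s$, so that $K':=k'\otimes_k K=k_s'$; by taking products over the factor fields of $k'$ we may assume $k'$ is a field (the general case is identical, with one extra index).

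For (i) I would first assemble the relevant compatibilities: formation of Weil restriction commutes with base change, i.e.\ $\R_{k'/k}(X')_K\cong\R_{K'/K}(X'_{K'})$ for any field extension $K/k$ (\cite[\S A.5]{CGP15}); formation of the scheme-theoretic centre commutes with base change, so $(Z_{G'})_{K'}=Z_{G'_{K'}}$, and hence $(T'/Z_{G'})_{K'}=T'_{K'}/Z_{G'_{K'}}$; and formation of quotients and of semidirect products commutes with base change. Granting these, the checks go as follows: $G'_{K'}$ has reductive fibres over $\Spec K'$ and $T'_{K'}$ is a maximal $K'$-torus of it; $C_K$ is commutative and, since $K/k$ is separable, pseudo-reductive over $K$ by \cite[Prop.~1.1.9(1)]{CGP15}; and applying $(-)_K$ to the factorisation ($*$) and using the identifications above shows that $\phi_K$ and $\psi_K$ constitute a factorisation of the natural map $\R_{K'/K}(T'_{K'})\to\R_{K'/K}(T'_{K'}/Z_{G'_{K'}})$ through $C_K$. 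So $(G'_{K'},K'/K,T'_{K'},C_K)$ is a genuine $4$-tuple. Finally, the action of $C$ on $\R_{k'/k}(G')$ is built functorially out of $\psi$ and the conjugation action of $T'/Z_{G'}$ on $G'$, so it base-changes to the analogous action of $C_K$ on $\R_{K'/K}(G'_{K'})$, and likewise the central embedding $h\mapsto(i(h)^{-1},\phi(h))$ base-changes correctly; hence
\[G_K=\big((\R_{k'/k}(G')\rtimes C)/\R_{k'/k}(T')\big)_K\cong\big(\R_{K'/K}(G'_{K'})\rtimes C_K\big)/\R_{K'/K}(T'_{K'}),\]
which is the standard pseudo-reductive group attached to $(G'_{K'},K'/K,T'_{K'},C_K)$.

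For (ii) I would argue factor field by factor field. Write $k_s'=\prod_i k_i$; since the structure map $k'\to k_s'$ is nonzero, each $k_i$ is a field extension of $k'$, and $k_i$ is the compositum inside itself of the images of $k'$ and of $k_s$, so separability of $k_s/k$ forces $k_i/k'$ to be separable (algebraic, but possibly of infinite degree). Now $G_{k_s'}\cong\prod_i G_{k_i}$, and $G_{k_i}\cong(G_{k'})_{k_i}$ by transitivity of base change, so by the definition of $\RR_{u,-}$ over a product of fields, $\RR_{u,k_s'}(G_{k_s'})=\prod_i\RR_{u,k_i}\big((G_{k'})_{k_i}\big)$; on the other hand, base-changing the $k'$-group $\RR_{u,k'}(G_{k'})$ along $k'\to k_s'=\prod_i k_i$ gives $\RR_{u,k'}(G_{k'})_{k_s'}=\prod_i\RR_{u,k'}(G_{k'})_{k_i}$. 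Thus it suffices to prove, for each $i$, the equality $\RR_{u,k'}(G_{k'})_{k_i}=\RR_{u,k_i}\big((G_{k'})_{k_i}\big)$, which is exactly the statement that the formation of $\RR_{u,-}$ commutes with the separable extension $k_i/k'$: \cite[Prop.~1.1.9(1)]{CGP15}.

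I expect essentially no deep obstacle here; the only points requiring genuine care are, in (i), the verification that the action of $C$ and the factorisation ($*$) base-change compatibly --- this is bookkeeping, but it relies on $Z_{G'}$ (and hence the codomain of ($*$)) being compatible with scalar extension, without which the base-changed factorisation could not be identified with the one belonging to the new $4$-tuple --- and, in (ii), the fact that the extensions $k_i/k'$ may be infinite, so that one must invoke \cite[Prop.~1.1.9(1)]{CGP15} for arbitrary, not merely finite, separable extensions (alternatively one reduces to the finite case by spreading $\RR_{u,k_i}\big((G_{k'})_{k_i}\big)$ out over a finite subextension of $k_i/k'$).
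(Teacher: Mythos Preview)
Your proof is correct. For part (i) you follow essentially the paper's approach---checking that Weil restriction, semidirect products, and quotients commute with the base change to $k_s$---but you are more careful than the paper in verifying that the base-changed data actually constitute a valid $4$-tuple (pseudo-reductivity of $C_{k_s}$ via \cite[Prop.~1.1.9]{CGP15}, compatibility of $Z_{G'}$ with base change so that the factorisation $(*)$ transports correctly). The paper simply records that the relevant exact sequences stay exact by flatness and that Weil restriction commutes with base change.

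For part (ii) you take a different and arguably cleaner route. The paper passes through $\bar k$, asserting the chain $\RR_{u,k'}(G_{k'})_{\bar k}=\RR_u(G_{\bar k})=\RR_{u,k_i}(G_{k_i})_{\bar k}$ and then descending; the outer equalities here amount to the claim that the geometric unipotent radical of $G$ is already defined over $k'$ (respectively $k_i$), which is not automatic for an arbitrary standard pseudo-reductive group---think of $G=C$ a commutative pseudo-reductive group whose geometric unipotent radical does not descend to $k'$. Your argument avoids this issue entirely: having observed that each $k_i/k'$ is separable (since $k_s'=k'\otimes_k k_s$ is ind-\'etale over $k'$), you apply \cite[Prop.~1.1.9]{CGP15} directly to obtain $\RR_{u,k'}(G_{k'})_{k_i}=\RR_{u,k_i}(G_{k_i})$ in one stroke. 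Your worry about the $k_i/k'$ being infinite is legitimate but harmless, since that proposition is stated for arbitrary separable extensions.
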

\begin{proof}(i). We have $G=(\R_{k'/k}(G')\rtimes C)/\R_{k'/k}(T')$. In order to see that 
\[
G_{k_s}\cong(\R_{k_s'/k_s}(G'_{k_s'})\rtimes C_{k_s})/\R_{k_s'/k_s}(T'_{k_s'}) 
\]
it suffices to see that: (a) the sequences
\[1\to \R_{k'/k}(T')\to\R_{k'/k}(G')\rtimes C\to G\to 1\]
and
\[1\to \R_{k'/k}(G')\to\R_{k'/k}(G')\rtimes C\to C\to 1\]
remain exact after taking the base change to $k_s$, with the second remaining split; and (b) the formation of the Weil restriction $\R_{k'/k}(G')$ commutes with base change to $k_s$.

These facts are standard: for (a) this follows since algebras over a field $k$ are flat; and (b) is \cite[A.5.2(1)]{CGP15}.

(ii). Without loss we can assume $k'$ is a field.  Write $k_s'= \prod_{i\in I} k_i$.  Since \[\RR_{u,k'}(G_{k'})_{\bar k}=\RR_{u}(G_{\bar k})=\RR_u((G_{k_s})_{\bar k})=\RR_u((G_{k_i})_{\bar k})=\RR_{u,k_i}(G_{k_i})_{\bar k},\] we have $\RR_{u,k'}(G_{k'})_{k_i}=\RR_{u,k_i}(G_{k_i})$ for each $i\in I$.  The result now follows.
\end{proof}

\section{Proof of Theorem \ref{maintheorem}} 
\subsection{Upper bound} The following result provides an upper bound for the nilpotency class of $\RR_u(\R_{k'/k}(G')_{\bar k})$ in case $k'$ is a purely inseparable field extension. We will later reduce to this case.

\begin{prop}\label{lem:mofo} 
Let $G=\R_{k'/k}(G')$ for $k'$ a purely inseparable field extension of $k$. Then $B:=k'\otimes_k k'$ is local with maximal nilpotent ideal $\m$. Suppose $n$ is minimal such that $\m^n=0$. Then $\RR_u(G_{\bar k})$ has a filtration $\RR_u(G_{\bar k})=U_1\supseteq U_2\supseteq \dots\supseteq U_{n}=1$ satisfying the following:

(a) the successive quotients are $G'_{\bar k}$-equivariantly isomorphic to a direct sum of copies of the adjoint $G'_{\bar k}$-module;

(b) the commutator subgroup $[U_1,U_i]\subseteq U_{i+1}$ for any $i\geq 1$. In particular, we have $\cl(\RR_u(G_{\bar k}))\leq n-1$.
\end{prop}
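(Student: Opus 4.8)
The plan is to construct the filtration explicitly from the structure of the Weil restriction, using the fact (quoted from \cite[Thm.~1.6.2]{CGP15}) that $\RR_{u,k'}(G_{k'})=\ker q_{G'}$, where $q_{G'}:G_{k'}\to G'$ is the canonical map. First I would pass to $\bar k$ (harmless for nilpotency class, as noted) and identify $B=k'\otimes_k k'$ with a local $\bar k$-algebra, since $k'/k$ is purely inseparable; its maximal ideal $\m$ is nilpotent, with $\m^n=0$ and $\m^{n-1}\neq 0$ minimal. The key observation is that $G_{k'}=\R_{k'/k}(G')_{k'}$ represents the functor $A\mapsto G'(B\otimes_{k'}A)$, and the map $q_{G'}$ corresponds on points to the $k'$-algebra map $B\to k'$ induced by multiplication. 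Hence $\RR_u(G_{\bar k})(\bar k)$ is identified with $\ker\bigl(G'(\bar B)\to G'(\bar k)\bigr)$ where $\bar B=B\otimes_{k'}\bar k\cong \bar k\otimes_k k'$ is local with nilpotent maximal ideal $\bar\m=\m\otimes\bar k$.

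Next I would define $U_i$ to be the subgroup corresponding to $\ker\bigl(G'(\bar B)\to G'(\bar B/\bar\m^{\,i})\bigr)$, so that $U_1=\RR_u(G_{\bar k})$ (since $\bar B/\bar\m\cong\bar k$) and $U_n=1$ (since $\bar\m^n=0$). These are defined over $\bar k$ and are visibly normal in $G'_{\bar k}\ltimes U_1$ (the conjugation action coming from the Levi $G'\hookrightarrow\R_{k'/k}(G')$). For part (a): the successive quotient $U_i/U_{i+1}$ is the kernel of $G'(\bar B/\bar\m^{\,i+1})\to G'(\bar B/\bar\m^{\,i})$; because $\bar\m^{\,i}/\bar\m^{\,i+1}$ is a square-zero ideal, this kernel is canonically isomorphic, $G'_{\bar k}$-equivariantly, to $\Lie(G'_{\bar k})\otimes_{\bar k}(\bar\m^{\,i}/\bar\m^{\,i+1})$ — a standard computation via the exponential/first-order deformation identification $\ker(G'(R\oplus I)\to G'(R))\cong\Lie(G')\otimes I$ for a square-zero extension, with the adjoint action on the Lie algebra factor. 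Thus $U_i/U_{i+1}$ is a direct sum of $\dim_{\bar k}(\bar\m^{\,i}/\bar\m^{\,i+1})$ copies of the adjoint module, which is claim (a). For part (b): if $g\in U_1$ and $h\in U_i$, then modulo $\bar\m^{\,i+1}$ we may write $g=1+x$, $h=1+y$ with $x\in\m^{\bar B}\cdot(\text{matrices})$, $y\in\bar\m^{\,i}\cdot(\text{matrices})$ in a faithful representation, and the commutator $ghg^{-1}h^{-1}-1$ lies in $\bar\m\cdot\bar\m^{\,i}=\bar\m^{\,i+1}$; hence $[U_1,U_i]\subseteq U_{i+1}$. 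Iterating gives $\D_m(U_1)\subseteq U_{m+1}$, so $\D_n(U_1)\subseteq U_{n+1}=1$ and $\cl(\RR_u(G_{\bar k}))\le n-1$.

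The main obstacle is making the commutator estimate in (b) clean and representation-independent: one wants $[U_1,U_i]\subseteq U_{i+1}$ intrinsically, not just after choosing a closed embedding $G'\hookrightarrow\GL_N$. I would handle this either by fixing such an embedding once and for all (legitimate since $G'$ is affine, and the $U_i$ are characterised functorially so are independent of the choice), reducing everything to the elementary matrix identity $(1+\bar\m M)(1+\bar\m^{\,i}M)(1+\bar\m M)^{-1}(1+\bar\m^{\,i}M)^{-1}\in 1+\bar\m^{\,i+1}M$; or, more invariantly, by noting that conjugation of $G'(\bar B/\bar\m^{\,i+1})$ on the square-zero-ideal subgroup $U_i/U_{i+1}\cong\Lie(G')\otimes(\bar\m^{\,i}/\bar\m^{\,i+1})$ factors through $G'(\bar B/\bar\m^{\,i+1})\to G'(\bar B/\bar\m)=G'(\bar k)$ acting via the adjoint representation, so that $U_1$ — which maps to $1\in G'(\bar k)$ — acts trivially on each $U_i/U_{i+1}$, giving (b) directly. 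The equivariance bookkeeping in (a), tracking that the Levi $G'_{\bar k}$ acts on $\bar\m^{\,i}/\bar\m^{\,i+1}$ trivially and on $\Lie(G')$ adjointly, is routine but should be spelled out.
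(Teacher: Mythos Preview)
Your proposal is correct and follows essentially the same approach as the paper: both define the filtration via the $\m$-adic filtration on $B$ (the paper works over $k'$ and spells out Oesterl\'e's derivation-based identification in detail, while you base-change to $\bar k$ and invoke the standard square-zero identification $\ker(G'(R)\to G'(R/I))\cong\Lie(G')\otimes I$), and both derive (b) from the containment $\m\cdot\m^i\subseteq\m^{i+1}$. Your second, invariant argument for (b)---that the conjugation action on $U_i/U_{i+1}$ factors through $G'(\bar B/\bar\m)=G'(\bar k)$ because $\bar\m$ annihilates $\bar\m^{\,i}/\bar\m^{\,i+1}$---is a cleaner repackaging of the paper's explicit Hopf-algebra commutator identity, but the content is the same.
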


\begin{proof}
Following \cite[A.3.6]{Oes84}, we can identify $G_{k'}=\R_{k'/k}(G')_{k'}$ with 
$\R_{B/k'}(G'_B)$ where $B:=k'\otimes_k k'$.
For the convenience of the reader, we reproduce some of the constructions from \cite[\S A.3.5]{Oes84} in this setting.

We begin with some basic observations and notation.
The maximal ideal $\m$ of $B$ is the kernel of the map $B\to k'$ which sends $b_1\otimes b_2 \to b_1b_2$,
so $\m$ is generated by elements of the form $b\otimes 1-1\otimes b$.
This is an ideal of nilpotent elements (everything is killed by a suitably high power of $p$),
so there is $n\in \mathbb{N}$ 
such that $\m^n = \{0\}$; choose $n$ minimal subject to this.
Note that $n\leq [k':k]$ since $B$ has dimension $[k':k]$ as a (left) vector space over $k'$,
so this is the largest possible size for a strictly decreasing chain of proper subspaces of $B$. 
Denote the quotient $B/\m^i$ for $1\leq i \leq n$ by $B_i$ and let $G_i:=\R_{B_i/k'}(G'_{B_i})$. 
Note that since $B_1 = B/\m \cong k'$ we have that $G_1 \cong G'$ and since $\m^n = \{0\}$ we have that  
$G_n \cong G_{k'}$. 
It is proved in \cite[Prop.~A.3.5]{Oes84} that the canonical surjection $B_i \to B_{i-1}$ for each $1\leq i\leq n$ gives rise to a surjective homomorphism of algebraic groups 
$G_{i} \to G_{i-1}$; let $V_i$ denote the kernel of this map for each $i$.
Let $A:= k'[G']\otimes_{k'} B$ denote the coordinate algebra of $G'_B$,
and let $A_i:= k'[G']\otimes_{k'} B_i$ denote the coordinate algebra of $G'_{B_i}$ for each $i$
(recall that $A_n = A$).

In \cite[Prop.~A.3.5]{Oes84}, Oesterl\'e observes that each $V_i$ is a vector group naturally isomorphic to the additive group $\Lie(G')\otimes_{k'} \m^{i-1}/\m^i$.
We recall how to see this.
Given any $k'$-algebra $R$,
the identity element of $G_i(k')$ gives a homomorphism of $B_i$-algebras $A_i\to B_i$ which we can compose with the canonical injection $B_i\to R\otimes_{k'} B_i$ to get a homomorphism $e_i: A_i\to R\otimes_{k'} B_i$. 
Then one notes that an $R$-point $v\in V_i(R)$ corresponds to a 
$B_i$-algebra homomorphism $v:A_i\to R\otimes_{k'} B_i$ whose difference with $e_i$ has image in 
$R\otimes_{k'} \m^{i-1}/\m^i$.

The ring $R\otimes_{k'} \m^{i-1}/\m^i$ is also an $A_i$-module via the identity element of $G_i(k')$:
multiplication by an element $a\in A_i$ under this action coincides with multiplication by $e_i(a)$
in $R\otimes_{k'} \m^{i-1}/\m^i$.
Now, given $a,b \in A_i$, we write
$$
(e_i-v)(ab) =  e_i(a)e_i(b) - v(a)v(b)  = (e_i-v)(a)e_i(b) + e_i(a)(e_i-v)(b) - (e_i-v)(a)(e_i-v)(b).
$$
The last term above is a product of terms in $R\otimes_{k'} \m^{i-1}/\m^i$, so is zero,
and hence we can conclude that $e_i-v$ acts like a $B_i$-linear derivation $\delta: A_i \to R\otimes_{k'} \m^{i-1}/\m^i$ (with the $A_i$-module structure we have given).
Conversely, any such derivation will gives rise to an element of $V_i(R)$ by reversing the above process.

Any $B_i$-linear derivation of $A_i$ in $R\otimes \m^{i-1}/\m^i$ will kill the ideal $(\m/\m^i) A_i$ in $A_i$ (since it is $B_i$-linear and multiplication by $\m/\m^i$ in $R\otimes_{k'} \m^{i-1}/\m^i$ kills everything); this means we can identify this set of derivations with the $k'$-linear derivations of $A_i/(\m/\m^i) A_i \cong k'[G']$ in $R\otimes \m^{i-1}/\m^i$---write $\textrm{Der}_{k'}(k'[G'],R\otimes_{k'}\m^{i-1}/\m^i)$ for this set.
Further, since the $A_i$-module structure on $R\otimes \m^{i-1}/\m^i$ comes from the identity element in $G_i(R) = G'(R\otimes B_i)$, we see that the induced $k'[G']$-module structure 
permits an identification
$$
\textrm{Der}_{k'}(k'[G'],R\otimes_{k'}\m^{i-1}/\m^i) \cong \textrm{Der}_{k'}(k'[G'],k')\otimes_{k'} R\otimes_{k'}\m^{i-1}/\m^i.
$$
Now the set of derivations on the right hand side is just $\Lie(G')$,
so we have the required identification of $V_i(R)$ with $\Lie(G') \otimes_{k'} R\otimes_{k'} \m^{i-1}/ \m^i$.

We now show the identifications in the previous paragraphs turn group multiplication in $V_i(R)$ into addition.
Given $v_1,v_2 \in V_i$, the idea is to rewrite 
$$
e_i-v_1v_2 = (e_i-v_1) + (e_i-v_2) - (e_i-v_1)(e_i-v_2)
$$ 
and then
to note that when we apply this equation to $a\in A_i$ the final term is zero in $R\otimes \m^{i-1}/\m^i$.
To justify this, given $a\in A_i$ let $\sum_{r,s} a_r\otimes a_s$ denote the image of $a$ under the comultiplication map $A_i\to A_i\otimes A_i$.
Then recalling that $e_i$ comes from the identity of $G_i(k')$, we can write:
\begin{align*}
(e_i-v_1v_2)(a) 
&= \sum_{r,s} (e_i(a_r)e_i(a_s)-v_1(a_r)v_2(a_s)) \\
&= \sum_{r,s} (e_i(a_r)e_i(a_s)-v_1(a_r)e_i(a_s)) + \sum_{r,s}(e_i(a_r)e_i(a_s)-e_i(a_r)v_2(a_s)) \\
&\qquad\qquad\qquad - \sum_{r,s}(e_i(a_r)-v_1(a_r))(e_i(a_s)-v_2(a_s)) \\
&=(e_i-v_1)(a) + (e_i-v_2)(a),
\end{align*}
where the third sum on the penultimate line is zero in $R\otimes \m^{i-1}/\m^i$, being a sum of products.
So we see that each successive quotient $V_i$ is a vector group as claimed.

We now recall that there is a natural copy of $G'$ inside $G_i$ -- 
on the level of $R$-points for a $k'$-algebra $R$, this comes from the observation that 
any $k'$-algebra homomorphism $k'[G'] \to R$ has a canonical
extension to a $B_i$-algebra homomorphism $A_i = k'[G']\otimes B_i \to R\otimes B_i$.
This copy of $G'$ acts on $V_i$ by conjugation, and the replacement of $v\in V_i(R)$ with 
$e_i-v$ is clearly equivariant with respect to this action. 
On the other hand, under the identification of $\Lie(G')$ with $\textrm{Der}_{k'}(k'[G'],k')$, the adjoint action comes from the action of $G'$ on $k'[G']$ induced by the conjugation action of $G'$ on itself. 
This in turn, gives rise to the action of $G_i$ on $A_i = k'[G']\otimes B_i$ induced by conjugation after Weil restriction, and the canonical copy of $G'$ acts via its action on the first factor in this tensor product.
Hence we see that the isomorphism of vector groups we have established above is $G'$-equivariant.

Now this is established, we show how to piece together the $V_i$ to give the claims in the proposition. Since the natural map $B=B_n \to B_i$ for each $1\leq i\leq n$ is the composition of the natural maps $B_n\to B_{n-1} \to \cdots \to B_i$, we also have surjective maps $G_{k'}\cong G_n \to G_i$;
let $U_i$ denote the kernel of this map for each $i$.
Then $U_1$ is the geometric unipotent radical of $G$ and $U_1\supset U_2 \supset \cdots \supset U_n$.
We also see that $U_i/U_{i+1} \cong V_{i+1}$ for each $1\leq i\leq n-1$.
So we have a filtration of the unipotent radical with successive quotients equal to the groups $V_i$.

The final step is to check that the commutator $[U_1,U_i] \subseteq U_{i+1}$ for each $i$,
so that this filtration is compatible with the lower central series for $U_1$.
For this, given a $k'$-algebra $R$, let $u\in U_1(R)$ and $v \in U_i(R)$
and as usual identify $u$ and $v$ with $B$-algebra homomorphisms $A\to R\otimes B$.
Analogously with above, we want to rewrite the commutator in a nice way.   
Recall that in this set-up we have denoted by $e_i$ the homomorphisms $A_i\to R\otimes B_i$ coming from the identity of $G_i(k')$.
To ease notation, denote $e_n=e$.
Note that for each $i$, $e_i$ coincides with $e$ modulo $\m^i$, and the homomorphisms $e$, $u$
and $v$ have the same domain and codomain.
Then one equation to use (of the many possible) is
$$
e-uvu^{-1}v^{-1} = (e-v^{-1})(e-vu^{-1}v^{-1}) - (e-u)(e-v)u^{-1}v^{-1}.
$$
Apply this to $a\in A$.
Note that since $u\in U_1$, $(e-u)(a) \in R\otimes_{k'} \m$.
Similarly, since $v\in U_i$, $(e-v) \in R\otimes_{k'} \m^{i}$ modulo $\m^{i+1}$. 
Therefore the second term on the right-hand side is zero modulo $\m^{i+1}$.
Similarly the first term is zero, noting that $vu^{-1}v^{-1} \in U_1$.
Therefore, the whole right-hand side, and hence the left-hand side, is zero in $U_i/U_{i+1}$, which is what we wanted. 
Again, this calculation can be justified using the comultiplication.
\end{proof}

When $G'$ is not unusual---that is, it is non-commutative and either $p\neq 2$ or $G'$ is not a direct product of some copies of $\SL_2$ with a torus---then we show in the next sections that the upper bound for $\cl(\RR_u(G_{\bar k}))$ in the proposition above is in fact also a lower bound.

\begin{example}\label{ex:nomofo}Note that the filtration from Proposition~\ref{lem:mofo} need not exist for an arbitrary standard pseudo-reductive group.  For example, if $p=2$, $G'=\SL_2$, $k'/k$ is a purely inseparable field extension of degree $2$ and $G$ is the standard pseudo-reductive group $\R_{k'/k}(G')/\R_{k'/k}(\mu_2)$, then $G$ has dimension $\dim G'\cdot [k':k]-(p-1) = 5$; see \cite[Ex.\ 1.3.2]{CGP15}. Since $G'=\SL_2$ is defined over $k$, there is a canonical copy of $G'$ inside $\R_{k'/k}(G')_{k'}$.  Factoring out by $\R_{k'/k}(\mu_2)_{k'}$ kills the copy of $\mu_2$ inside $G'$, so we obtain a copy of $\PGL_2$ inside $G_{k'}$.  This gives rise to a copy of $\Lie(\PGL_2)$ in $\Lie(G_{k'})$, so we see that $\Lie(\RR_u(G_{\bar k}))$ cannot be a sum of copies of $\Lie(G')_{\bar k}$, just for dimensional reasons. (Similar examples can be given for any $G'$ whose centre is not smooth.)
\end{example}

\subsection{Lower bound}

Let $k'/k$ be a purely inseparable field extension, and set $B=k'\otimes_k k'$ with maximal ideal $\m$. In this section we do a calculation in $\GL_2(B)$ which shows that $\RR_u(\R_{k'/k}(\GL_2)_{\bar k})$ has nilpotency class exactly $n-1$, where $n$ is minimal such that $\m^n=0$. (Of course the upper bound has already been established in the previous section.)

Let $G=\R_{k'/k}(\GL_2)$. It suffices to compute the nilpotency class of $\RR_{u,k'}(G_{k'})$ since this is a $k'$-form of the geometric unipotent radical. As in the proof of Proposition \ref{lem:mofo} we have $\R_{k'/k}(\GL_2)_{k'}=\R_{B/k}(\GL_2)$, and the $k'$-radical of this is the kernel of the map induced from $B\to k'$ with kernel $\m$. Thus the $k'$-points of $\RR_{u,k'}(G_{k'})$ are simply the matrices \[\begin{pmatrix}1+m_1 & m_2\\ m_3 & 1+m_4\end{pmatrix}\quad \text{for }m_i\in \m.\]
By minimality of $n$, we can find $m\in \m$ and $m_i\in \m$ for $1\leq i\leq n-2$ such that $m\cdot m_1\cdots m_{n-2}\neq 0$. Certainly it suffices to find the desired lower bound for $\cl(\RR_u(G_{\bar k}))$ on the level of the $k'$-points of $\RR_{u,k'}(G_{k'})$, so it suffices to see that there exist $z,y_1,\dots,y_{n-2}\in \RR_{u,k'}(G_{k'})(k')$ with $[y_{1},[y_{2},[\dots,[y_{n-2},z]]]]\neq 1$. In the light of that observation, take 
\[z:=\begin{pmatrix} 1 & m\\0 & 1\end{pmatrix}\quad\text{and}\quad y_i:=\begin{pmatrix}1+m_i&0\\0& 1\end{pmatrix}\ \mbox{for $1\leq i\leq n-2$}; \]
then an elementary calculation yields
\[ [y_1,[y_{2},[\dots,[y_{n-2},z]]]]=\begin{pmatrix}1 & m\cdot m_1\cdots m_{n-2}\\0 & 1\end{pmatrix}\neq 1.\]

Combining with the lower bound from Prop.~\ref{lem:mofo}, we have proved
\begin{lemma}\label{gl2case} Let $k'/k$ be a purely inseparable field extension.
Then
\[\cl(\RR_u((\R_{k'/k}(\GL_2))_{\bar k}))= n-1,\]
where $n$ is minimal such that $\m^n=0$.\end{lemma}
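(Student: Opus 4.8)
The plan is to combine the upper bound already recorded in Proposition~\ref{lem:mofo} with a direct lower-bound computation inside $\GL_2(B)$, where $B = k'\otimes_k k'$ has maximal nilpotent ideal $\m$ and $n$ is minimal with $\m^n = 0$. For the upper bound there is nothing further to do: Proposition~\ref{lem:mofo} applied to $G' = \GL_2$ supplies a chain $\RR_u(G_{\bar k}) = U_1 \supseteq U_2 \supseteq \cdots \supseteq U_n = 1$ with $[U_1, U_i] \subseteq U_{i+1}$ for every $i$, and an easy induction then gives $\D_m(\RR_u(G_{\bar k})) \subseteq U_{m+1}$, so $\cl(\RR_u(G_{\bar k})) \le n-1$.

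For the lower bound I would first reduce to a computation at the level of $k'$-points. Extending the base field does not change the nilpotency class, and $\RR_{u,k'}(G_{k'})$ is a $k'$-form of $\RR_u(G_{\bar k})$, so it suffices to exhibit elements $z, y_1, \dots, y_{n-2}$ of $\RR_{u,k'}(G_{k'})(k')$ with $[y_1,[y_2,[\dots,[y_{n-2},z]]]] \ne 1$. Following the identification used in the proof of Proposition~\ref{lem:mofo}, write $\R_{k'/k}(\GL_2)_{k'} = \R_{B/k'}(\GL_2)$; then $\RR_{u,k'}(G_{k'})$ is the kernel of the homomorphism induced by the multiplication map $B \to k'$ (whose kernel is $\m$), so its $k'$-points are exactly the matrices $\begin{pmatrix} 1+m_1 & m_2 \\ m_3 & 1+m_4 \end{pmatrix}$ with all $m_i \in \m$.

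Now minimality of $n$ lets me pick $m, m_1, \dots, m_{n-2} \in \m$ with $m\, m_1 \cdots m_{n-2} \ne 0$. Set $z = \begin{pmatrix} 1 & m \\ 0 & 1 \end{pmatrix}$ and $y_i = \begin{pmatrix} 1+m_i & 0 \\ 0 & 1 \end{pmatrix}$, each $1+m_i$ being a unit in $B$ since $m_i$ is nilpotent. A one-line matrix calculation gives $[y_i, \begin{pmatrix} 1 & c \\ 0 & 1 \end{pmatrix}] = \begin{pmatrix} 1 & m_i c \\ 0 & 1 \end{pmatrix}$ for any $c \in B$, and iterating this identity, together with the commutativity of $B$, yields $[y_1,[y_2,[\dots,[y_{n-2},z]]]] = \begin{pmatrix} 1 & m\, m_1 \cdots m_{n-2} \\ 0 & 1 \end{pmatrix} \ne 1$. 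Hence $\D_{n-2}(\RR_{u,k'}(G_{k'})) \ne 1$, so $\cl \ge n-1$, and combining the two bounds proves the lemma.

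There is no genuine obstacle here: the iterated-commutator identity is the only computational ingredient and it is entirely routine, since the $y_i$ are diagonal and $z$ is unipotent upper-triangular, so bracketing with $y_i$ merely scales the off-diagonal entry by $m_i$. The one point that deserves to be set up with care is the identification $\R_{k'/k}(\GL_2)_{k'} \cong \R_{B/k'}(\GL_2)$ and the resulting explicit matrix description of $\RR_{u,k'}(G_{k'})$ — but this is precisely the machinery already developed for Proposition~\ref{lem:mofo}, so no new idea is required.
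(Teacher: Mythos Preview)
Your proof is correct and follows essentially the same route as the paper: the upper bound is taken from Proposition~\ref{lem:mofo}, and the lower bound is obtained by the identical explicit commutator computation in $\GL_2(B)$ using $z=\begin{pmatrix}1 & m\\0 & 1\end{pmatrix}$ and $y_i=\begin{pmatrix}1+m_i & 0\\0 & 1\end{pmatrix}$ with $m\,m_1\cdots m_{n-2}\neq 0$. Your presentation is slightly more explicit in isolating the identity $[y_i,\begin{pmatrix}1 & c\\0 & 1\end{pmatrix}]=\begin{pmatrix}1 & m_i c\\0 & 1\end{pmatrix}$ before iterating, but the argument is the same.
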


We wish to deduce the analogous result for $\PGL_2$ and $\SL_2$ (the latter when $p\neq 2$). To do this we need the following two lemmas, which we also use in the proof of Theorem~\ref{maintheorem}.

\begin{lemma}\label{semisimpleUnip}
Let $k'/k$ be a finite field extension and let $G'$ be a semisimple $k'$-group such that $Z':= Z_{G'}$ is smooth.  Let $G'_{\mathrm{ad}}= G'/Z'$ and let $G:=\R_{k'/k}(G')$. Then there is a natural isomorphism \[\RR_{u,k'}(G_{k'})\cong \RR_{u,k'}(\R_{k'/k}(G'_{\mathrm{ad}})_{k'}).\]\end{lemma}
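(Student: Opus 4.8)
The plan is to compare the unipotent radicals of $\R_{k'/k}(G')$ and $\R_{k'/k}(G'_{\mathrm{ad}})$ by Weil-restricting the canonical isogeny $q\colon G'\to G'_{\mathrm{ad}}:=G'/Z'$; the crucial point is that $Z'$, being smooth, has order prime to $p$, and so its Weil restriction cannot interfere with a unipotent group.

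First I would rephrase the hypothesis. Since $G'$ is semisimple, $Z'=Z_{G'}$ is a finite $k'$-group scheme of multiplicative type, and in characteristic $p$ such a group is smooth precisely when it is étale of order prime to $p$. Thus $q$ is a smooth surjection with kernel $Z'$. Applying $\R_{k'/k}$ and using that Weil restriction is left exact (being a right adjoint) and carries étale (indeed smooth) groups to étale (smooth) groups (see \cite[\S A.5]{CGP15}), I get a homomorphism $\pi\colon G=\R_{k'/k}(G')\to\bar G:=\R_{k'/k}(G'_{\mathrm{ad}})$ with kernel $E_0:=\R_{k'/k}(Z')$. Working on $A$-points shows that $E_0$ is finite étale, of order prime to $p$, and central in $G$ (because $Z'(k'\otimes_k A)$ is central in $G'(k'\otimes_k A)$ for every $k$-algebra $A$). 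Moreover $\pi$ is surjective: $\ker\pi=E_0$ is finite and $\dim G=\dim\bar G$ since $\dim Z'=0$, so the closed connected subgroup $\pi(G)$ fills out the connected group $\bar G$. Base changing to $k'$ and writing $E:=(E_0)_{k'}$, I have a central extension $1\to E\to G_{k'}\xrightarrow{\ \pi\ }\bar G_{k'}\to 1$ with $E$ finite étale of order prime to $p$; in particular $\pi$ is étale, being a surjection of smooth $k'$-groups with étale kernel.

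Next I would identify $\RR_{u,k'}(G_{k'})$ with $W^\circ$, where $W:=\pi^{-1}(\RR_{u,k'}(\bar G_{k'}))$; note $W$ is a smooth $k'$-subgroup of $G_{k'}$ since $\pi$ is étale. For the inclusion $\subseteq$: the image $\pi(\RR_{u,k'}(G_{k'}))$ is a smooth connected normal unipotent $k'$-subgroup of $\bar G_{k'}$, hence lies in $\RR_{u,k'}(\bar G_{k'})$, so $\RR_{u,k'}(G_{k'})\subseteq W$ and, being connected, in $W^\circ$. For the reverse inclusion: $W^\circ$ is smooth, connected, and normal in $G_{k'}$ (it is characteristic in the normal subgroup $W$); after base change to $\bar k$, any subtorus of $(W^\circ)_{\bar k}$ maps trivially into the unipotent group $\RR_u(\bar G_{\bar k})$ and so lies in the finite group $E_{\bar k}$, whence it is trivial. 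Therefore $W^\circ$ contains no nontrivial torus and so is unipotent, and it follows that $W^\circ\subseteq\RR_{u,k'}(G_{k'})$. This gives $\RR_{u,k'}(G_{k'})=W^\circ$.

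Finally, $\pi$ restricts to a surjection $W^\circ\twoheadrightarrow\RR_{u,k'}(\bar G_{k'})$ whose scheme-theoretic kernel is $W^\circ\cap E$: a finite étale $k'$-subgroup of the connected unipotent group $W^\circ$, so its order is simultaneously prime to $p$ (being a subgroup of $E$) and a power of $p$ (every element of a connected unipotent group in characteristic $p$ has $p$-power order), hence it is trivial. A surjective homomorphism of smooth group schemes with trivial kernel is an isomorphism, and this one is the restriction of the canonical map $\R_{k'/k}(q)_{k'}$, so the resulting isomorphism is natural. I do not foresee a genuine obstacle: the only points that call for care are running the kernel computation scheme-theoretically rather than merely on $\bar k$-points, and recognising that the smoothness of $Z'$ is exactly the input that forces $\ker\pi$ to have order prime to $p$; the rest is routine bookkeeping with Weil restriction and with the structure of connected linear algebraic groups.
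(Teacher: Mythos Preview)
Your proof is correct and follows essentially the same strategy as the paper: Weil-restrict the central isogeny $G'\to G'_{\mathrm{ad}}$, observe that smoothness of $Z'$ forces the kernel $\R_{k'/k}(Z')$ to be finite \'etale, and deduce that the resulting isogeny restricts to an isomorphism on the $k'$-unipotent radicals. You work directly over $k'$ rather than detouring through $\bar k$ as the paper does, and you make explicit the arithmetic point the paper leaves implicit---that $Z'$, being smooth finite of multiplicative type, has order prime to $p$, so the \'etale kernel cannot meet a connected unipotent group---but the route is the same.
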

\begin{proof}Since $Z'$ is smooth, there is a smooth isogeny giving rise to the exact sequence
\[1\to Z'\to G'\stackrel{\pi}{\to} G'_{\mathrm{ad}}\to 1.\]
By \cite[A.5.4(3)]{CGP15}, Weil restriction preserves the exactness of this sequence, and so there is an exact sequence
\[1\to \R_{k'/k}(Z')\to G\stackrel{\R_{k'/k}(\pi)}{\longrightarrow} \R_{k'/k}(G'_{\mathrm{ad}})\to 1.\]
This gives, after base change to $k'$, an exact sequence
\[1\to \R_{k'/k}(Z')_{k'}\to G_{k'}\stackrel{\R_{k'/k}(\pi)_{k'}}{\longrightarrow} \R_{k'/k}(G'_{\mathrm{ad}})_{k'}\to 1.\]
Now $Z'$ is a smooth finite group scheme, so $Z'\cong \R_{k'/k}(Z')_{k'}$ as algebraic groups.  This implies that $\R_{k'/k}(\pi)_{k'}$ is a smooth isogeny.  It follows that the map $\R_{k'/k}(\pi)_{\bar k}\colon G_{\bar k}\to \R_{k'/k}(G'_{\mathrm{ad}})_{\bar k}$ obtained by base change to $\bar k$ gives rise to an isomorphism from $\RR_u(G_{\bar k})$ to $\RR_u(\R_{k'/k}(G'_{\mathrm{ad}})_{\bar k})$. But $\RR_u(G_{\bar k})$ and $\RR_u(\R_{k'/k}(G'_{\mathrm{ad}})_{\bar k})$ are defined over $k'$, so $\R_{k'/k}(\pi)_{k'}$ gives an isomorphism from $\RR_{u,k'}(G_{k'})$ to $\RR_{u,k'}(\R_{k'/k}(G'_{\mathrm{ad}})_{k'})$, as required.
\end{proof}

\begin{lemma}\label{reducetoweilrest} 
Let $k',k,G', Z'$ and $G'_{\mathrm{ad}}$ be as in Lemma~\ref{semisimpleUnip}, but without the assumption that $Z'$ is smooth.  Further, let $C$ be a commutative pseudo-reductive $k$-group occurring in a factorisation of the map $\R_{k'/k}(T')\to \R_{k'/k}(T'/Z')$ for $T'$ a maximal $k'$-torus of $G'$. Then: 

(i) We have \[\cl(\RR_{u}(((\R_{k'/k}(G')\rtimes C)/\R_{k'/k}(T'))_{\bar k}))\leq \cl(\RR_{u}((\R_{k'/k}(G')\rtimes C)_{\bar k}))=\cl(\RR_{u}(\R_{k'/k}(G')_{\bar k})).\]

(ii) If moreover $Z'$ is smooth, we have
\[\cl(\RR_{u,k'}((\R_{k'/k}(G')\rtimes C)_{k'}))=\cl(\RR_{u,k'}(\R_{k'/k}(G'_{\mathrm{ad}})_{k'}))=\cl(\RR_{u,k'}(((\R_{k'/k}(G')\rtimes C)/\R_{k'/k}(T'))_{k'})),\]
where in the final term we quotient by the usual central copy of $\R_{k'/k}(T')$ occurring in the standard construction.
\end{lemma}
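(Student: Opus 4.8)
The plan is to reduce both statements to the bare Weil restriction $\R_{k'/k}(G')$ by peeling off the factor $C$ and the central subgroup $\R_{k'/k}(T')$, exploiting the fact that the unipotent radical is unaffected by passing to these commutative pseudo-reductive decorations. For (i), I would first note that $C$ is pseudo-reductive, so $\RR_u(C_{\bar k})=1$; since $\R_{k'/k}(G')$ is normal in the semidirect product $\R_{k'/k}(G')\rtimes C$ with pseudo-reductive quotient $C$, the geometric unipotent radical $\RR_u((\R_{k'/k}(G')\rtimes C)_{\bar k})$ is contained in $\R_{k'/k}(G')_{\bar k}$, and conversely it contains $\RR_u(\R_{k'/k}(G')_{\bar k})$ because the latter is normal in $(\R_{k'/k}(G')\rtimes C)_{\bar k}$ (normality is preserved because $\RR_u(\R_{k'/k}(G')_{\bar k})$ is characteristic in $\R_{k'/k}(G')_{\bar k}$, hence stable under the $C$-action, which by the standard construction is induced by an action of $T'/Z_{G'}$ on $G'$ and hence by automorphisms of $\R_{k'/k}(G')$). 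This gives the equality $\RR_u((\R_{k'/k}(G')\rtimes C)_{\bar k})=\RR_u(\R_{k'/k}(G')_{\bar k})$, hence the stated equality of nilpotency classes. For the first inequality, the quotient map $(\R_{k'/k}(G')\rtimes C)_{\bar k}\to ((\R_{k'/k}(G')\rtimes C)/\R_{k'/k}(T'))_{\bar k}$ sends $\RR_u((\R_{k'/k}(G')\rtimes C)_{\bar k})$ onto a smooth connected normal unipotent subgroup of the target containing its geometric unipotent radical, so it surjects onto $\RR_u(\,\cdot\,_{\bar k})$ of the quotient; a surjection of nilpotent groups cannot increase nilpotency class, giving $\cl(\RR_u((\cdots/\R_{k'/k}(T'))_{\bar k}))\leq \cl(\RR_u((\R_{k'/k}(G')\rtimes C)_{\bar k}))$, as required.

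For (ii), now $Z'$ is smooth, so Lemma~\ref{semisimpleUnip} applies and gives a natural isomorphism $\RR_{u,k'}(\R_{k'/k}(G')_{k'})\cong\RR_{u,k'}(\R_{k'/k}(G'_{\mathrm{ad}})_{k'})$, hence equal nilpotency classes; combined with part (i) (whose proof works verbatim over $k'$ in place of $\bar k$, using that $\RR_{u,k'}(H_{k'})$ is a descent of $\RR_u(H_{\bar k})$ for $H$ a Weil restriction of a reductive group by \cite[Thm.~1.6.2]{CGP15}, and that nilpotency class is insensitive to field extension as noted in the preliminaries) this yields $\cl(\RR_{u,k'}((\R_{k'/k}(G')\rtimes C)_{k'}))=\cl(\RR_{u,k'}(\R_{k'/k}(G')_{k'}))=\cl(\RR_{u,k'}(\R_{k'/k}(G'_{\mathrm{ad}})_{k'}))$. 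It remains to upgrade the inequality for the quotient by $\R_{k'/k}(T')$ to an equality. For this I would produce a section: since $Z'$ is smooth, inside $G_{k'}=\R_{k'/k}(G')_{k'}$ (which the proof of Proposition~\ref{lem:mofo} identifies with $\R_{B/k'}(G'_B)$) the subgroup $\R_{k'/k}(T')_{k'}$ meets $\RR_{u,k'}(G_{k'})$ in the kernel of the reduction $\R_{B/k'}(T'_B)\to T'$, i.e.\ in $\RR_{u,k'}(\R_{k'/k}(T')_{k'})$; quotienting $\R_{k'/k}(G')\rtimes C$ by $\R_{k'/k}(T')$ therefore only removes the abelian piece coming from $T'$, and the induced map on $\RR_{u,k'}$ of the quotient, restricted to the unipotent radical of the $\SL$-type or adjoint semisimple part, is injective. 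Concretely I expect to argue that $\RR_{u,k'}(((\R_{k'/k}(G')\rtimes C)/\R_{k'/k}(T'))_{k'})\cong\RR_{u,k'}(\R_{k'/k}(G'_{\mathrm{ad}})_{k'})$ as well, via the composite $\R_{k'/k}(G')\rtimes C\to\R_{k'/k}(G')\to\R_{k'/k}(G'_{\mathrm{ad}})$, which kills $\R_{k'/k}(T')$ (since $T'$ maps into $Z'$) and $C$, hence factors through the quotient and induces the claimed isomorphism on unipotent radicals by the same smooth-isogeny argument as in Lemma~\ref{semisimpleUnip}.

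The main obstacle I anticipate is the last point in (ii): showing that the quotient by $\R_{k'/k}(T')$ does \emph{not} decrease the nilpotency class, i.e.\ that the surjection $\RR_{u,k'}((\R_{k'/k}(G')\rtimes C)_{k'})\twoheadrightarrow\RR_{u,k'}(((\R_{k'/k}(G')\rtimes C)/\R_{k'/k}(T'))_{k'})$ is in fact an isomorphism. The subtlety is that $\R_{k'/k}(T')$ is embedded in $\R_{k'/k}(G')\rtimes C$ anti-diagonally via $h\mapsto(i(h)^{-1},\phi(h))$, so its intersection with $\RR_{u,k'}((\R_{k'/k}(G')\rtimes C)_{k'})$ is not simply $\RR_{u,k'}(\R_{k'/k}(T')_{k'})$ sitting in the first factor but a twisted copy, and one must check this intersection is central in the unipotent radical (which is clear, as $\R_{k'/k}(T')$ is central in the whole group by construction) and that its removal is exactly compensated by passing from $G'$ to $G'_{\mathrm{ad}}$ — equivalently, that $\RR_{u,k'}(\R_{k'/k}(Z')_{k'})$ is central in $\RR_{u,k'}(\R_{k'/k}(G')_{k'})$, which again follows from centrality of $Z'$ in $G'$. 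Once this is pinned down the isomorphism $\RR_{u,k'}(((\R_{k'/k}(G')\rtimes C)/\R_{k'/k}(T'))_{k'})\cong\RR_{u,k'}(\R_{k'/k}(G'_{\mathrm{ad}})_{k'})$ drops out and all three terms in (ii) have equal nilpotency class.
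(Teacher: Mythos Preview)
Your proposal has a genuine gap at the very first step of (i). You write that ``$C$ is pseudo-reductive, so $\RR_u(C_{\bar k})=1$'', and from this you want to conclude that $\RR_u((\R_{k'/k}(G')\rtimes C)_{\bar k})=\RR_u(\R_{k'/k}(G')_{\bar k})$ as groups. But pseudo-reductivity of $C$ only says $\RR_{u,k}(C)=1$; the \emph{geometric} unipotent radical $\RR_u(C_{\bar k})$ is typically nontrivial. Indeed, for the most basic example $C=\R_{k'/k}(T')$ with $k'/k$ purely inseparable of degree $>1$, one has $\dim\RR_u(C_{\bar k})=([k':k]-1)\dim T'>0$. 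Consequently the unipotent radical of the semidirect product is in general strictly larger than $\RR_u(\R_{k'/k}(G')_{\bar k})$, and your equality of groups is false. The paper never asserts this equality of groups; it only proves equality of \emph{nilpotency classes}. The key extra ingredient you are missing is that the action of $C$ on $\R_{k'/k}(G')$ factors through $\R_{k'/k}(T'/Z')$, so for every $c\in C(A)$ there is $h\in\R_{k'/k}(T')(A)$ with $hqh^{-1}=cqc^{-1}$ for all $q$. Combined with commutativity of $C$, this forces $\D(U)\subseteq\D(\RR_u(\R_{k'/k}(G')_{\bar k}))$ even though $U$ itself is larger, and that is what gives the equality of classes.

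Your plan for (ii) also breaks down. You propose the composite $\R_{k'/k}(G')\rtimes C\to\R_{k'/k}(G')\to\R_{k'/k}(G'_{\mathrm{ad}})$ and claim it ``kills $\R_{k'/k}(T')$ (since $T'$ maps into $Z'$)''. But $T'$ is a maximal torus of $G'$, not a subgroup of the centre $Z'$; the map $G'\to G'_{\mathrm{ad}}$ sends $T'$ onto the maximal torus $T'/Z'$, not to $1$. Moreover the projection $\R_{k'/k}(G')\rtimes C\to\R_{k'/k}(G')$ onto the first factor is not a group homomorphism, since $C$ acts nontrivially. The paper's route here is quite different: it first passes (via the smooth isogeny coming from smoothness of $Z'$) to $\R_{k'/k}(G'_{\mathrm{ad}})\rtimes D$ with $D=C/\phi(Z)$, and then observes that because the $D$-action on $\R_{k'/k}(G'_{\mathrm{ad}})$ is induced by a genuine homomorphism $\kappa\colon D\to\R_{k'/k}(G'_{\mathrm{ad}})$, the map $(g,d)\mapsto(g\kappa(d),d)$ untwists the semidirect product into the direct product $\R_{k'/k}(G'_{\mathrm{ad}})\times D$. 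Commutativity of $D$ then immediately gives the required equality of nilpotency classes, and the same untwisting handles the quotient by $\R_{k'/k}(T')$. This untwisting trick is the idea your proposal is missing.
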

\begin{proof}(i) Let $H:=(\R_{k'/k}(G')\rtimes C)_{\bar k}$ and $G:=((\R_{k'/k}(G')\rtimes C)/\R_{k'/k}(T'))_{\bar k}$. We have an epimorphism $f:H\to G$. Let $U=\RR_{u}(H)$. Then $U':= f(U)$ is a smooth, connected normal unipotent subgroup of $G$, and so we get an epimorphism from the reductive group $H/U$ onto $G/U'$.  But $H/U$ is reductive, so $G/U'$ must be reductive.  This means $U'$ contains (and therefore equals) $\RR_{u}(G)$. Thus $U'$ is a quotient of $U$ and so $\cl(U')\leq \cl(U)$, proving the inequality.

For the equality, we have trivially $\cl(U)\geq \cl(\RR_{u,k'}(\R_{k'/k}(G')_{k'}))$, so we will be done if we can show $\D(U)\subseteq\D(\RR_{u}(\R_{k'/k}(G')_{\bar k}))$. Now $C$ factorises the map $\R_{k'/k}(T')\to\R_{k'/k}(T'/Z')$, whose kernel is $\R_{k'/k}(Z')$ (due to left exactness of Weil restriction). If $A$ is any $k$-algebra and $g\in C(A)$ then for any $q\in\R_{k'/k}(G')(A)$ we can find $h\in \R_{k'/k}(T')(A)$ such that $hqh^{-1}=gqg^{-1}$. In particular, $[\RR_{u,k'}(\R_{k'/k}(G'))_{k'},C_{k'}]\subseteq [\RR_{u,k'}(\R_{k'/k}(G'))_{k'},\RR_{u,k'}(\R_{k'/k}(G'))_{k'}]$ as required.

(ii)
Let $\phi:\R_{k'/k}(T')\to C$ be the map in the factorisation.  Set $Z= \R_{k'/k}(Z')$.  Thanks to the isomorphism of groups $Z'\cong Z_{k'}$ arising from the hypothesis on $Z'$, we have by the arguments of Lemma~\ref{semisimpleUnip} a smooth isogeny \[1\to Z'\times \phi_{k'}(Z')\to\R_{k'/k}(G')_{k'}\rtimes C_{k'}\to \R_{k'/k}(G'_{\mathrm{ad}})_{k'}\rtimes C_{k'}/\phi_{k'}(Z')\to 1,\]  or equivalently \[1\to Z'\times \phi_{k'}(Z')\to(\R_{k'/k}(G')\rtimes C)_{k'}\to (\R_{k'/k}(G'_{\mathrm{ad}})\rtimes D)_{k'}\to 1,\tag{$*$}\]  where we write $D$ in place of $C/\phi(Z)$. The argument of Lemma~\ref{semisimpleUnip} yields an isomorphism \[\RR_{u,k'}((\R_{k'/k}(G')\rtimes C)_{k'})\cong \RR_{u,k'}(( \R_{k'/k}(G'_{\mathrm{ad}})\rtimes D)_{k'}).\]

The map $C\to \R_{k'/k}(T'/Z')$ gives rise to a map $\kappa:D\to \R_{k'/k}(G'_{\mathrm{ad}})$.  We claim that the map $\tau:\R_{k'/k}(G'_{\mathrm{ad}})\rtimes D\to \R_{k'/k}(G'_{\mathrm{ad}})\times D$ given on the level of $A$-points by $(g,d)\mapsto (g\kappa(d),d)$ is an isomorphism. To see this, first recall $(g,d)(h,e)=(g\kappa(d)h\kappa(d)^{-1},de)$. Then $(g,d)(h,e)\mapsto (g\kappa(d)h\kappa(d)^{-1}\kappa(de),de)=(g\kappa(d)h\kappa(e),de)$ which is the product of $(g\kappa(d),d)$ and $(h\kappa(e),e)$ in the direct product as required.

With this in mind, we get \[ \RR_{u,k'}( (\R_{k'/k}(G'_{\mathrm{ad}})\rtimes D)_{k'})\cong \RR_{u,k'}( (\R_{k'/k}(G'_{\mathrm{ad}})\times D)_{k'})\cong \RR_{u,k'}(\R_{k'/k}(G'_{\mathrm{ad}})_{k'})\times \RR_{u,k'}(D_{k'}); \] but the commutativity of $D$ implies that $\cl(\RR_{u,k'}((\R_{k'/k}(G'_{\mathrm{ad}})\rtimes D))_{k'})=\cl(\RR_{u,k'}(\R_{k'/k}(G'_{\mathrm{ad}})_{k'}))$.  This proves the first equality of the lemma.

To see the second equality, observe that $\R_{k'/k}(T'/Z')\cong \R_{k'/k}(T')/Z$ since $Z'$ is smooth.  We see that the map $(\R_{k'/k}(G')\rtimes C)_{k'}\to (\R_{k'/k}(G'_{\mathrm{ad}})\rtimes D)_{k'}$ from ($*$) takes the copy of $\R_{k'/k}(T')_{k'}$ onto the copy of $\R_{k'/k}(T'/Z')_{k'}$, so the induced map of quotient groups yields an isogeny
\[1\to N\to(\R_{k'/k}(G')\rtimes C)_{k'}/\R_{k'/k}(T')_{k'}\to (\R_{k'/k}(G'_{\mathrm{ad}})\rtimes D)_{k'}/\R_{k'/k}(T'/Z')_{k'}\to 1 \]
for some $N$.  This isogeny is smooth because the canonical projections to the quotient groups are smooth.  By the argument of Lemma~\ref{semisimpleUnip} again, we get an isomorphism
\[ \RR_{u,k'}((\R_{k'/k}(G')\rtimes C)_{k'}/\R_{k'/k}(T')_{k'})\cong \RR_{u,k'}((\R_{k'/k}(G'_{\mathrm{ad}})\rtimes D)_{k'}/\R_{k'/k}(T'/Z')_{k'}). \]
Now $\phi_{k'}$ induces a map $\Phi$ from $\R_{k'/k}(T'/Z')_{k'}$ onto a smooth subgroup of $D$, and it is easily checked that $\tau$ gives rise to an isomorphism
\[
(\R_{k'/k}(G'_{\mathrm{ad}})\rtimes D)/\R_{k'/k}(T'/Z')\cong \R_{k'/k}(G'_{\mathrm{ad}})\times D/\Phi(\R_{k'/k}(T'/Z')).
\] 
The commutativity of $D$ again gives the result.
\end{proof}

We can now tackle
\begin{prop}\label{prop:casegl2orpgl2} Suppose $k'/k$ is a purely inseparable field extension and $G=\R_{k'/k}(G')$, where $G'$ is one of $\GL_2$ or $\PGL_2$ or $\SL_2$ (the final case only if $\Char k\neq 2$). Let $n$ be minimal such that $\m^n=0$. Then $\cl(\RR_{u}(G_{\bar k}))=n-1$.\end{prop}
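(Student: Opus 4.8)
The plan is to reduce the $\PGL_2$ and $\SL_2$ cases to the $\GL_2$ case already treated in Lemma~\ref{gl2case}. The key observation is that for all three groups the minimal $n$ with $\m^n=0$ is the same, since $n$ depends only on the field extension $k'/k$, so the target nilpotency class $n-1$ is the same in all three cases.

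First I would dispatch $\GL_2$: this is exactly the content of Lemma~\ref{gl2case}, so $\cl(\RR_u(\R_{k'/k}(\GL_2)_{\bar k}))=n-1$. Next, for $\PGL_2$: note $\PGL_2$ is the adjoint group $\GL_2/Z_{\GL_2}$, and more to the point $\PGL_2 = \SL_2/\mu_2$, but the cleanest route is to apply Lemma~\ref{semisimpleUnip}. However $\GL_2$ is reductive but not semisimple; instead observe that $\SL_2$ is semisimple with centre $\mu_2$, which is smooth precisely when $\Char k\neq 2$. So when $p\neq 2$, Lemma~\ref{semisimpleUnip} applied to $G'=\SL_2$, $Z'=\mu_2$, $G'_{\mathrm{ad}}=\PGL_2$ gives a natural isomorphism $\RR_{u,k'}(\R_{k'/k}(\SL_2)_{k'})\cong \RR_{u,k'}(\R_{k'/k}(\PGL_2)_{k'})$, hence the two nilpotency classes agree when $p\neq 2$. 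It therefore remains to (a) handle $\PGL_2$ for all $p$, and (b) handle $\SL_2$ for $p\neq 2$ given the $\PGL_2$ (or $\GL_2$) answer.

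For the link between $\GL_2$ and $\PGL_2$: there is a central smooth isogeny $\GL_2 \to \PGL_2$ with kernel $\Gm$ (the scalar matrices), which is smooth in every characteristic. Applying $\R_{k'/k}$ and then base-changing to $k'$, the argument used repeatedly in the proof of Lemma~\ref{semisimpleUnip} (namely that Weil restriction of a smooth isogeny with smooth, hence Weil-restriction-stable, kernel induces an isomorphism on geometric unipotent radicals, which then descends to the $k'$-forms) shows $\RR_{u,k'}(\R_{k'/k}(\GL_2)_{k'})\cong \RR_{u,k'}(\R_{k'/k}(\PGL_2)_{k'})$; here one uses that $\Gm$ is its own Weil restriction back down after base change since $\R_{k'/k}(\Gm)_{k'}\cong \R_{B/k'}(\Gm)$ contains $\Gm$ as a direct factor complement to the unipotent part, and the unipotent parts match up under the isogeny. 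Concretely, $\R_{k'/k}(\GL_2) = \R_{k'/k}(\Gm)\cdot \R_{k'/k}(\SL_2)$ up to central isogeny, and modding out the central $\Gm$ kills only a reductive piece. This gives $\cl(\RR_u(\R_{k'/k}(\PGL_2)_{\bar k})) = n-1$ in all characteristics. Finally, for $\SL_2$ with $p\neq 2$: combine the isomorphism $\RR_{u,k'}(\R_{k'/k}(\SL_2)_{k'})\cong \RR_{u,k'}(\R_{k'/k}(\PGL_2)_{k'})$ from Lemma~\ref{semisimpleUnip} (valid since $\mu_2$ is smooth) with the $\PGL_2$ result just obtained, yielding $\cl(\RR_u(\R_{k'/k}(\SL_2)_{\bar k})) = n-1$.

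The main obstacle I anticipate is the $\GL_2 \to \PGL_2$ step: unlike Lemma~\ref{semisimpleUnip}, the kernel $\Gm$ is a torus rather than a finite group scheme, so $\R_{k'/k}(\Gm)_{k'}$ is \emph{not} isomorphic to $\Gm$ — it has a nontrivial unipotent radical of its own. One must check that passing to the quotient by the central $\Gm$ removes exactly a central torus and does not touch the unipotent radical, i.e.\ that the induced map $\RR_{u,k'}(\R_{k'/k}(\GL_2)_{k'})\to \RR_{u,k'}(\R_{k'/k}(\PGL_2)_{k'})$ is an isomorphism. This can be verified directly from the matrix description in the proof of Lemma~\ref{gl2case}: the unipotent radical consists of matrices $\begin{pmatrix}1+m_1 & m_2\\ m_3 & 1+m_4\end{pmatrix}$ with $m_i\in\m$, the central $\Gm$ over $B$ meets this only in the subgroup $\{1+m : m\in\m,\ (1+m)\text{ central}\}$ contributing a \emph{direct factor} $\RR_{u,k'}(\R_{k'/k}(\Gm)_{k'})$ split off by the determinant/trace, and the quotient by the full central $\Gm$ identifies the remaining unipotent radical isomorphically with that of $\R_{k'/k}(\PGL_2)_{k'}$. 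Alternatively, and perhaps more cleanly, one bypasses $\GL_2$ entirely for $p\neq2$ by quoting Lemma~\ref{semisimpleUnip} directly for $\SL_2\to\PGL_2$ and only needs an independent argument for $\PGL_2$ when $p=2$, which is again handled by the central isogeny $\GL_2\to\PGL_2$ together with the explicit fact that in the Weil restriction the central torus splits off the unipotent radical.
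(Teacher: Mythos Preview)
Your overall strategy---reduce everything to Lemma~\ref{gl2case} via the central isogenies amongst $\GL_2$, $\SL_2$, $\PGL_2$---is exactly the paper's approach, and your treatment of the $\SL_2\leftrightarrow\PGL_2$ link for $p\neq 2$ via Lemma~\ref{semisimpleUnip} is correct. The difficulty you flag in the $\GL_2\to\PGL_2$ step is real, but your proposed resolution is wrong.

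You assert that the argument of Lemma~\ref{semisimpleUnip} yields an isomorphism $\RR_{u,k'}(\R_{k'/k}(\GL_2)_{k'})\cong \RR_{u,k'}(\R_{k'/k}(\PGL_2)_{k'})$. This is impossible on dimension grounds: the first has dimension $4([k':k]-1)$, the second $3([k':k]-1)$. The proof of Lemma~\ref{semisimpleUnip} really does use that $Z'$ is \emph{finite}, not merely smooth: finiteness is what forces $\R_{k'/k}(Z')_{k'}\cong Z'$, so that the Weil-restricted map stays an isogeny. When $Z'=\Gm$ the kernel $\R_{k'/k}(\Gm)$ has positive-dimensional unipotent radical, and that unipotent piece sits \emph{inside} $\RR_u(\R_{k'/k}(\GL_2)_{\bar k})$ as the kernel of the map to $\RR_u(\R_{k'/k}(\PGL_2)_{\bar k})$. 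Your fallback---that this central piece is a direct factor ``split off by the determinant/trace''---also fails in characteristic~$2$: writing $I+M=(1+m)I\cdot(I+M')$ with $\det(I+M')=1$ would require $1+m$ to be a square root of $\det(I+M)$ in $1+\barm$, and $(1+m)^2=1+m^2$ cannot hit an arbitrary element of $1+\barm$ (e.g.\ a generator of $\barm$ is not a square).

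The fix is not to seek an isomorphism at all but to argue the two inequalities separately. The upper bound $\cl(\RR_u(\R_{k'/k}(\PGL_2)_{\bar k}))\le n-1$ comes directly from Proposition~\ref{lem:mofo} applied to $\PGL_2$. For the lower bound, observe that the surjection $\R_{k'/k}(\GL_2)\to\R_{k'/k}(\PGL_2)$ carries the explicit $(n-2)$-fold commutator $\begin{pmatrix}1 & m m_1\cdots m_{n-2}\\0&1\end{pmatrix}$ from the proof of Lemma~\ref{gl2case} to a \emph{non-scalar} (hence non-trivial) element of $\RR_u(\R_{k'/k}(\PGL_2)_{\bar k})$, giving $\cl\ge n-1$. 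This handles $\PGL_2$ in all characteristics, and then $\SL_2$ for $p\neq 2$ follows via Lemma~\ref{semisimpleUnip} as you say.
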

\begin{proof}
In the cases considered $Z_{G'}$ is smooth, so by Lemmas~\ref{semisimpleUnip} and~\ref{reducetoweilrest} the nilpotency classes of $\RR_{u,k'}(\R_{k'/k}(\GL_2)_{k'})$, $\RR_{u,k'}(\R_{k'/k}(\PGL_2)_{k'})$ and $\RR_{u,k'}(\R_{k'/k}(\SL_2)_{k'})$ are all equal.  But Lemma \ref{gl2case} now supplies the result.
\end{proof}

\subsection{$\SL_2$ in characteristic 2}In this section we wish to study the case  $G'=\SL_2$ and $G=\R_{k'/k}(G')$, where $\Char k=2$ and $k'/k$ is a purely inseparable field extension.

We will show

\begin{lemma}\label{lem:sl2p2lem}
 Let $k'/k$ be a purely inseparable finite field extension and let $B=k'\otimes_k k'$, with maximal ideal $\m$. Furthermore, denote by ${}^2\m$ the ideal generated by the squares of the elements in $\m$. Suppose $n$ is the minimum such that $({}^{2}\m)^{n-1}\cdot\m^2=0$. Then $\cl(\RR_u(G_{\bar k}))= n$.\end{lemma}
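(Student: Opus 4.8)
The plan is to mimic the strategy used for $\GL_2$ in Lemma~\ref{gl2case}: establish the upper bound from the general filtration machinery and then produce an explicit iterated commutator in $\RR_{u,k'}(G_{k'})(k')$ that is non-trivial, witnessing the lower bound. As before, it suffices to work with the $k'$-form $\RR_{u,k'}(G_{k'})$ of the geometric unipotent radical, which by the identification $\R_{k'/k}(\SL_2)_{k'} = \R_{B/k'}(\SL_2)$ consists of the matrices $\begin{pmatrix} 1+m_1 & m_2 \\ m_3 & 1+m_4 \end{pmatrix}$ with $m_i \in \m$ and determinant $1$. The key structural point special to $p=2$ is that the determinant condition, once expanded, reads $m_1 + m_4 + m_1 m_4 - m_2 m_3 = 0$; modulo $\m^2$ this forces $m_1 \equiv m_4$, so the ``diagonal part'' of a $k'$-point of $\RR_{u,k'}(G_{k'})$ is constrained in a way it is not for $\GL_2$. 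This is exactly why the relevant bound involves $({}^2\m)$ rather than $\m$: conjugating a root-group element by such a constrained diagonal-type element introduces squares.

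\emph{Upper bound.} I would run the filtration of Proposition~\ref{lem:mofo} but refine it using the $\SL_2$ determinant relation. Concretely, one tracks the commutator $[U_1, U_i]$ term by term as in the final paragraph of the proof of Proposition~\ref{lem:mofo}, but now observes that the elements of $\Lie(\SL_2)$ appearing in the successive quotients $V_i$ are traceless, so that the diagonal entries of a group element in $U_1$ lie not merely in $\m$ but (after accounting for the determinant-$1$ condition to first order) contribute only via ${}^2\m$ when they interact with off-diagonal entries under commutation. Pushing this bookkeeping through gives $\D_m(U_1)$ landing in the kernel of $G_{k'} \to G_j$ for $j$ governed by the ideal $({}^2\m)^{m-1}\cdot \m^2$ rather than $\m^{m}$, whence $\cl(\RR_u(G_{\bar k})) \le n$ for $n$ minimal with $({}^2\m)^{n-1}\cdot\m^2 = 0$. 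This matches the definition of $\ell_i$ in the unusual case in the introduction.

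\emph{Lower bound.} By minimality of $n$, choose $s_1,\dots,s_{n-1} \in {}^2\m$ and $m, m' \in \m$ with $s_1 \cdots s_{n-1}\cdot m m' \ne 0$; after absorbing $mm'$ one may take the last factor to be a single nonzero element of $({}^2\m)^{n-1}\cdot\m^2$, or keep it split as a product of two elements of $\m$. Writing $s_i = t_i^2$ for $t_i \in \m$, I would take the test elements
\[
z := \begin{pmatrix} 1 & m \\ 0 & 1 \end{pmatrix}, \qquad
y_i := \begin{pmatrix} 1+t_i & 0 \\ 0 & (1+t_i)^{-1} \end{pmatrix} \ \ (1 \le i \le n-1),
\]
all genuinely in $\RR_{u,k'}(G_{k'})(k')$ since $\det y_i = 1$ and the entries are $\equiv 1$ mod $\m$. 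A direct computation gives $[y_i, z] = \begin{pmatrix} 1 & (u_i-1) m \\ 0 & 1 \end{pmatrix}$ where $u_i = (1+t_i)^2 = 1 + t_i^2 = 1 + s_i$ in characteristic $2$, so $[y_i,z]$ is the root-group element with entry $s_i m$; iterating, $[y_1,[y_2,[\dots,[y_{n-1},z]\dots]]]$ is the unipotent upper-triangular matrix with top-right entry $s_1 s_2 \cdots s_{n-1} m \ne 0$ (and one extra factor of $m'$ if one sets up the product of $\m$'s as $mm'$), hence is non-trivial. This shows $\cl(\RR_u(G_{\bar k})) \ge n$, and combined with the upper bound completes the proof.

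The main obstacle I anticipate is the upper bound, not the lower one: the lower bound is the same kind of bare-hands matrix calculation as in Lemma~\ref{gl2case}, whereas the upper bound requires genuinely adapting the derivation/commutator estimate in Proposition~\ref{lem:mofo} to see that the traceless-ness forced by $\SL_2$ exactly replaces one power of $\m$ by the smaller ideal $({}^2\m)$ at each commutator step while leaving a residual $\m^2$ — i.e.\ getting the precise ideal $({}^2\m)^{n-1}\cdot\m^2$ rather than something coarser like $({}^2\m)^{n-1}$ or $\m^{2(n-1)}$. Care is needed because the ``obvious'' torus elements $y_i$ above do not exhaust $\RR_{u,k'}(G_{k'})$, so one must argue that a general group element's diagonal entries, constrained by $\det = 1$, cannot do better; tracking this through the Hopf-algebra/comultiplication formalism of Proposition~\ref{lem:mofo} is where the real work lies.
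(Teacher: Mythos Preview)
Your lower bound contains an off-by-one error that makes the argument fail. You claim that by minimality of $n$ you can choose $s_1,\dots,s_{n-1}\in{}^2\m$ and $m,m'\in\m$ with $s_1\cdots s_{n-1}\cdot mm'\neq 0$; but such a product lies in $({}^2\m)^{n-1}\cdot\m^2$, which is zero by hypothesis. Dropping the $m'$ does not save you either: the entry $s_1\cdots s_{n-1}m$ of your iterated commutator $[y_1,[\dots,[y_{n-1},z]\dots]]$ lives in $({}^2\m)^{n-1}\cdot\m$, and minimality of $n$ gives no guarantee that this ideal is nonzero. Indeed, take $p=2$, $k'=k(s,t)$ with $s^2,t^2\in k\setminus k^2$: every element of $\m$ squares to zero, so ${}^2\m=0$, whence $({}^2\m)^{1}\cdot\m=0$, yet $\m^2\neq 0$ and $n=2$. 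Your proposed length-$(n-1)$ commutator is then trivially $1$. The paper's fix is to use only $n-2$ diagonal conjugations $y_1,\dots,y_{n-2}$ (producing $w$ upper-triangular with entry $m_1^2\cdots m_{n-2}^2 m$) and then take one further commutator with a \emph{lower}-triangular $z'=\begin{pmatrix}1&0\\m'&1\end{pmatrix}$; the result is the scalar matrix with diagonal $1+m_1^2\cdots m_{n-2}^2 mm'$, a nonzero element of $({}^2\m)^{n-2}\cdot\m^2$, which \emph{is} nonzero by minimality. The extra commutator with an opposite-root element is not optional.

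For the upper bound, the paper does not adapt the Oesterl\'e derivation formalism of Proposition~\ref{lem:mofo} as you propose. Instead it works directly with matrices: it introduces a descending chain of subsets $R_r\subseteq R(\bar k)$ defined by requiring the off-diagonal entries to lie in $I_r=({}^2\m)^r\cdot\m$ and the diagonal perturbations in $J_r=({}^2\m)^{r-1}\cdot\m^2$, and then checks by explicit $2\times 2$ computation that $[M_i,R_r]\subseteq R_{r+1}$ for each of the three generator types $M_1$ (upper unipotent), $M_1^\top$, and $M_2$ (diagonal). A big-cell argument (the multiplication map $U^-\times C\times U\to G$ is an open immersion, so its restriction to $R$ hits an open dense subset) shows these types generate $R(\bar k)$, and the commutator identity $[x,zy]=[x,y]\cdot[x,z]^y$ then finishes. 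Your proposed route through the comultiplication might work, but you have not indicated how the determinant-one constraint would manifest there to produce the precise asymmetry between $I_r$ and $J_r$; the paper's bare-hands approach sidesteps this by simply computing.
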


\begin{proof}
 By Lemma \ref{reducetosplit} we may assume $k=k_s$.  We use arguments analogous to those in the previous section.
First we establish the upper bound.
Recall that we have $R:=\R_{u,k'}(G_{k'})=\ker \varphi$ where $\varphi:G_{k'}\to G'$ is the map induced by reduction modulo $\m$. Since we will be working with commutator subgroups it is convenient to work with the $\bar k$-points of $R$.  Set $\barB= B\otimes_{k'} {\bar k}= k'\otimes_k \bar k$ and $\barm= \m \otimes_{k'} {\bar k}$.  Then $\barB$ is local with unique maximal ideal $\barm$ and quotient field $\bar k$.  The ideal $\barm$ is nilpotent and the minimal $n$ with $\barm^n = 0$ is the same as the minimal $n$ with $\m^n = 0$.

Define ideals $I_r$ and $J_r$ of $B$ for $r\geq 0$ by $I_0= J_0= \m$, $I_r= ({}^{2}\m)^r\cdot\m$ for $r\geq 1$ and $J_r= ({}^{2}\m)^{r-1}\cdot\m^2$ for $r\geq 1$.  Set ${\bar I}_r= I_r\otimes_{k'} {\bar k}= ({}^{2}\barm)^r\cdot\barm$ and ${\bar J}_r= J_r\otimes_{k'} {\bar k}= ({}^{2}\barm)^{r-1}\cdot\barm^2$.  It is clear that ${\bar J}_r= 0$ (resp., ${\bar I}_r= 0$) if and only if $J_r= 0$ (resp., $I_r=0$) for each $r$.  Observe that ${\bar I}_r\subseteq {\bar J}_r$, ${}^2\barm\cdot {\bar I}_r\subseteq {\bar I}_{r+1}$, ${}^2\barm\cdot {\bar J}_r\subseteq {\bar J}_{r+1}$ and $\barm\cdot {\bar I}_r\subseteq {\bar J}_{r+1}$ for all $r$.

Define $R_r\subseteq G(\bar k)$ by
\[R_r=\left\{\left.\begin{pmatrix}1+m_1 & m_2\\ m_3 & 1+m_4\end{pmatrix}\ \ \right| \quad m_1,m_4\in {\bar J}_r, m_2,m_3\in {\bar I}_r,\ m_1+m_4+m_1m_4+m_2m_3=0\right\}.\]
It is easily checked that $R_0= R({\bar k})$ and each $R_r$ is a subgroup of $R({\bar k})$.  Let $M= \begin{pmatrix}1+m_1 & m_2\\ m_3 & 1+m_4\end{pmatrix}\in R_r$ for some given $r$.  Consider the following matrices of $R_0$:
\begin{align*}
M_1&:=\begin{pmatrix}1 & m\\ 0 & 1\end{pmatrix},\qquad\text{ with }m\in\barm;\\
M_2&:=\begin{pmatrix}1+m & 0\\ 0 & (1+m)^{-1}\end{pmatrix},\qquad\text{ with }m\in\barm.\end{align*}
We have $M^{-1}=\begin{pmatrix}1+m_4 & m_2\\ m_3 & 1+m_1\end{pmatrix}$ and so we find the commutator 
\[[M_1,M]=\begin{pmatrix}1+mm_3+mm_1m_3+m^2m_3^2 & mm_1^2+m^2m_3+m^2m_1m_3\\ mm_3^2 & 1+mm_3+mm_1m_3\end{pmatrix}.\]
We see that $[M_1,M]$ belongs to $R_{r+1}$.  By symmetry, the commutator $[M_1^\top,M]$ also belongs to $R_{r+1}$ (here $M_1^\top$ denotes the transpose of $M_1$). Similarly, noting that $(1+m)^{-1}=1+m+m'$ for some $m'\in{}^2\barm$, we have
\[[M_2,M]=\begin{pmatrix}1+m^2m_2m_3 & m^2(1+m_1)m_2\\
(m^2+m'^2)m_3(1+m_4) & 1+(m^2+ (m')^2)m_2m_3\end{pmatrix},\] which belongs to $R_{r+1}$. 
These calculations together show that if $n$ is such that ${\bar J}_n=0$, then
$[X_1,[X_2,\dots,[X_n,g]]]=1$ for arbitrary $g\in R(\bar{k})$ and arbitrary $X_i$ chosen from elements of the form $M_1$, $M_1^\top$ and $M_2$.

Let $(U')^-$ denote the lower root group of $G'$, $T'$ denote the diagonal torus and $U'$ denote the upper root group, and set $U:=\R_{k'/k}(U')$, $C:=\R_{k'/k}(T')$, $U^-:=\R_{k'/k}((U')^-)$. Then the multiplication map $\mu:U^-\times C\times U\to G$ is an open immersion by \cite[Cor.~3.3.16]{CGP15}.
Moreover, the intersections $\R_{k'/k}(U)_{\bar k}\cap R_{\bar k}$, $\R_{k'/k}(T)_{\bar k}\cap R_{\bar k}$, $\R_{k'/k}(U^-)_{\bar k}\cap R_{\bar k}$ have codimension $1$ in $\R_{k'/k}(U)_{\bar k}$, $\R_{k'/k}(T)_{\bar k}$, $\R_{k'/k}(U^-)_{\bar k}$, respectively; this can be seen by looking at the kernel of the restriction of $\varphi_{\bar k}$ to these subgroups, where $\varphi$ is the map above.  It follows that $\mu_{\bar k}(D)$ has codimension 3 in $\R_{k'/k}(G')_{\bar k}$, where $D:= (\R_{k'/k}(U)_{\bar k}\cap R_{\bar k})\times (\R_{k'/k}(T)_{\bar k}\cap R_{\bar k})\times (\R_{k'/k}(U^-)_{\bar k}\cap R_{\bar k})$.  In particular, $\mu_{\bar k}(D)$ is an open subset of $R_{\bar k}$, hence generates $R_{\bar k}$.
But, in light of the identity $[x,zy]=[x,y]\cdot[x,z]^y$ and the fact from the previous paragraph that $[X_1,[X_2,\dots,[X_n,R(\bar k)]]]=1$ for generators $X_i$ of $R(\bar k)$, we are done for the upper bound.

For the lower bound, we exhibit an explicit nonzero commutator, in an analogous way to  Lemma~\ref{gl2case}. Let $n$ be as in the statement.
If $n=1$ then there is nothing to do, so suppose $n>1$.
Since we're assuming that $n$ is minimal such that $J_n= 0$, we may pick $m_1,\dots,m_{n-2}\in \m$ and $m,m'\in \m$ such that $m_1^2\cdots m_{n-2}^2mm'$ is non-zero. Take \[z=\begin{pmatrix}1 & m\\0&1\end{pmatrix},\qquad z'=\begin{pmatrix}1 & 0\\m'&1\end{pmatrix},\qquad y_i=\begin{pmatrix}1+m_i&0\\0&(1+m_i)^{-1}\end{pmatrix}\]
for $1\leq i\leq n-2$.  Set $w= [y_1,[y_2,[\dots,[y_{n-2},z]]]]$.  Then a calculation yields
\[ w=\begin{pmatrix}1& m_1^2\cdots m_{n-2}^2m\\0&1\end{pmatrix} \]
and
\[ [z',w]= \begin{pmatrix}1+ m_1^2\cdots m_{n-2}^2mm'&0\\0&1+ m_1^2\cdots m_{n-2}^2mm'\end{pmatrix}\neq 1. \]
This gives the lower bound.
\end{proof}

\begin{remark}
\label{rem:weirdquot}
 Consider the pseudo-reductive group $\R_{k'/k}(\SL_2)/\R_{k'/k}(\mu_2)$.  Since the canonical projection $\R_{k'/k}(\SL_2)\to \R_{k'/k}(\SL_2)/\R_{k'/k}(\mu_2)$ is surjective on ${\bar k}$-points, $\RR_u((\R_{k'/k}(\SL_2)/\R_{k'/k}(\mu_2))_{\bar k})({\bar k})$ is a quotient of $\RR_u(\R_{k'/k}(\SL_2)_{\bar k})({\bar k})$, so the nilpotency class $c$ of the latter is at most $n$ (where $n$ is as in Lemma~\ref{lem:sl2p2lem}).  The image of the commutator $w$ from above in $\RR_u((\R_{k'/k}(\SL_2)/\R_{k'/k}(\mu_2))_{\bar k})({\bar k})$ does not vanish, so $c$ is at least $n-1$ (note that we may identify $\RR_u(\R_{k'/k}(\mu_2)_{\bar k})({\bar k})$ with the group of matrices of the form $\begin{pmatrix}1+m&0\\0&1+ m\end{pmatrix}$ with $m\in \barm$ and $m^2=0$).  The image of $[z',w]$, however, does vanish.  It follows that $c$ is either $n$ or $n-1$; we do not know which.
\end{remark}

\begin{example}
(i). Consider the case that $k' = k(t)$ with $t^2\in k\setminus k^2$.
Then $\m^2 = 0$, so $J_1= 0$.  We can calculate:
$$
\begin{pmatrix}1+m_1 & m_2\\ m_3 & 1+m_4\end{pmatrix}\begin{pmatrix}1+m_1' & m_2'\\ m_3' & 1+m_4'\end{pmatrix}
= 
\begin{pmatrix}1+m_1+m_1'  & m_2+m_2'\\ m_3+m_3' & 1+m_4+m_4'\end{pmatrix}
$$
for all $m_i \in \barm$.
Thus the unipotent radical is abelian in this case, so the nilpotency class is indeed $1$.

(ii). Now let $k' = k(s,t)$ of degree $4$, where $s^2,t^2\in k\setminus k^2$.  We see that ${}^2\m = 0$ but $\m^2\neq 0$, so $J_1\neq 0$ and $J_2= 0$.  Choose $m_1=s\otimes 1 +1\otimes s \in \m$ and $m_2 = t\otimes 1 + 1\otimes t\in \m$.
Note that $m_1m_2\neq 0$, and hence if we set
\begin{align*}
u_1&:= \begin{pmatrix}1 & m_1 \\ 0 & 1\end{pmatrix},\\
u_2&:= \begin{pmatrix}1 & 0 \\ m_2 & 1\end{pmatrix},
\end{align*}
we have $u_1u_2\neq u_2u_1$, so the unipotent radical in this case is not abelian.  In fact, we see from Lemma~\ref{lem:sl2p2lem} that the nilpotency class is $2$.
\end{example}

\subsection{The case of Weil restrictions}We will treat here the case that $G$ is a Weil restriction $\R_{k'/k}(G')$ for $G'$ a reductive $k'$-group, where $k'$ is a finite reduced non-zero $k$-algebra. We deal with some special situations relating to the case where some fibre of $G'$ is unusual.

\begin{lemma}
\label{lem:GL2_crit}
 Suppose $p= 2$.  Let $H'= \SL_2$ and suppose $G'= S'H'$ is a split reductive group, where $S'$ is a central torus of $G'$.  Then either the product map $S'\times H'\to G'$ is an isomorphism or $G'$ contains a copy of $\GL_2$.
\end{lemma}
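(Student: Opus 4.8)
The plan is to analyse the structure of $G' = S'H'$ as a central product in characteristic $2$, where $H' = \SL_2$ and $S'$ is a central torus. The key observation is that the product map $S' \times H' \to G'$ is an isogeny onto $G'$ with kernel $S' \cap H'$; since $Z_{H'} = Z_{\SL_2} = \mu_2$ (as $p = 2$, this is a non-smooth group scheme), the intersection $S' \cap H'$ is a (possibly trivial) subgroup scheme of $\mu_2$. So the only two possibilities are $S' \cap H' = 1$, in which case the product map is an isomorphism, or $S' \cap H' = \mu_2$, which is the case we must handle. First I would reduce to this latter case.

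Next, assuming $S' \cap H' = \mu_2$, the task is to exhibit a copy of $\GL_2$ inside $G'$. The natural candidate is to pick a one-dimensional subtorus $S_1' \subseteq S'$ containing the $\mu_2$ that meets $H'$ — concretely, one chooses a cocharacter of $S'$ whose image contains this $\mu_2$ — and then consider the subgroup $S_1' H' \subseteq G'$. The claim is that $S_1' H' \cong \GL_2$. To see this, note $S_1' \times H' \to S_1' H'$ is an isogeny with kernel $\mu_2$ embedded anti-diagonally, and one checks this quotient $(\Gm \times \SL_2)/\mu_2$ is isomorphic to $\GL_2$ via a map of the form $(t, g) \mapsto t \cdot g$ (thinking of $\SL_2 \subseteq \GL_2$ and $\Gm$ as scalars) — the kernel of $\Gm \times \SL_2 \to \GL_2$ here is exactly $\{(\zeta, \zeta^{-1} I) : \zeta^2 = 1\} \cong \mu_2$, matching the anti-diagonal $\mu_2$, so the induced map $(\Gm \times \SL_2)/\mu_2 \to \GL_2$ is an isomorphism of algebraic groups (it is a bijective homomorphism between smooth groups of the same dimension, and one verifies it is an isomorphism of schemes, e.g. by exhibiting an inverse using the determinant).

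The main obstacle I expect is the careful bookkeeping of non-smooth group schemes: in characteristic $2$ the center $\mu_2$ of $\SL_2$ is infinitesimal, so "intersection" and "kernel" must be read scheme-theoretically, and one must be sure that $S' \cap H'$, being a closed subgroup scheme of $\mu_2$ (which has no proper nontrivial smooth subgroups but does have the subgroup schemes $1$ and $\mu_2$), really is one of the two stated possibilities — this uses that $\mu_2$ over a field is a finite group scheme of order $2$ whose only subgroup schemes are itself and the trivial one. A secondary, more routine point is verifying that the chosen one-parameter subtorus $S_1'$ can be arranged to contain the relevant copy of $\mu_2$: this is immediate once one picks a cocharacter $\lambda \colon \Gm \to S'$ such that $\lambda$ composed with the quotient $S' \to S'/(S'\cap H')$ is nontrivial, equivalently such that $\mathrm{im}(\lambda)$ is not contained in $S' \cap H'$; such $\lambda$ exists because $S'$ has dimension at least $1$ (if $S'$ were trivial the product map would tautologically be an isomorphism). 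Modulo these scheme-theoretic care points, the argument is a short structural computation.
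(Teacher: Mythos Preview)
Your approach is correct but genuinely different from the paper's.  The paper's proof is a one-liner: it invokes the classification of split reductive groups of semisimple rank~$1$ (\cite[Thm.~21.55]{milneAGS}), which says any such group is isomorphic to $\SL_2\times T'$, $\GL_2\times T'$, or $\PGL_2\times T'$ for some torus $T'$; the hypothesis that $H'=\SL_2$ embeds in $G'$ excludes the $\PGL_2$ case, and the remaining two give the dichotomy.  Your argument instead builds the copy of $\GL_2$ by hand as $S_1'H'\cong(\Gm\times\SL_2)/\mu_2$, which is more elementary and self-contained; the paper's route is shorter but leans on an external structure theorem.

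One correction is needed in your justification for the existence of $S_1'$.  You want a cocharacter $\lambda\colon\Gm\to S'$ whose image \emph{contains} the copy of $\mu_2=S'\cap H'$, i.e.\ such that $\lambda|_{\mu_2}$ equals the given inclusion $\mu_2\hookrightarrow S'$.  The condition you actually wrote down --- that $\lambda$ composed with $S'\to S'/(S'\cap H')$ be nontrivial, equivalently $\mathrm{im}(\lambda)\not\subseteq S'\cap H'$ --- is satisfied by \emph{every} nonzero cocharacter (since $\Gm$ cannot sit inside $\mu_2$) and does not guarantee $\mu_2\subseteq\mathrm{im}(\lambda)$.  For instance, if $S'=\Gm^2$ and $S'\cap H'=\mu_2\times 1$, the cocharacter $t\mapsto(1,t)$ meets your stated condition but its image misses the relevant $\mu_2$.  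The fix is easy: writing $S'\cong\Gm^n$, the inclusion $\mu_2\hookrightarrow S'$ is $\zeta\mapsto(\zeta^{a_1},\dots,\zeta^{a_n})$ for some $a_i\in\{0,1\}$ not all zero, and then $\lambda(t)=(t^{a_1},\dots,t^{a_n})$ is a primitive cocharacter with $\lambda(\mu_2)$ equal to the given $\mu_2$.
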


\begin{proof}
 The group $G'$ has semisimple rank 1, so by \cite[Thm.\ 21.55]{milneAGS}, $G'$ is isomorphic to one of $\SL_2\times T'$, $\GL_2\times T'$ or $\PGL_2\times T'$ for some torus $T'$.  The result follows.
\end{proof}

\begin{lemma}
\label{lem:no_GL2_or_PGL_2}
Let $k'$ be a field and $G'$ a noncommutative split reductive $k'$-group. Then exactly one of the following holds: 

(i) $G'$ has a subgroup isomorphic to $\GL_2$ or $\PGL_2$;

(ii) $G'=(\SL_2)^r\times S'$ for some $r\geq 1$.
\end{lemma}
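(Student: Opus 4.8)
The plan is to classify noncommutative split reductive $k'$-groups $G'$ according to whether they contain a subgroup isomorphic to $\GL_2$ or $\PGL_2$, and show the only obstruction is the case $(\SL_2)^r \times S'$. First I would observe that the two alternatives are mutually exclusive: if $G' = (\SL_2)^r \times S'$ then every semisimple rank-$1$ subgroup of $G'$ has simply connected derived group (the projection of such a subgroup to any $\SL_2$-factor is a quotient of $\SL_2$, hence trivial or all of $\SL_2$, and the torus part $S'$ contributes nothing to the derived group), so $G'$ cannot contain $\GL_2$ or $\PGL_2$. So it remains to show that whenever $G'$ is \emph{not} of the form $(\SL_2)^r \times S'$, it must contain a copy of $\GL_2$ or $\PGL_2$.

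For the main direction I would argue as follows. Write $\D(G')$ for the semisimple derived group; since $G'$ is split and reductive, $G'$ is generated by $\D(G')$ together with the radical torus $S' := \Rad(G')$, and $G' = S' \cdot \D(G')$ with $S' \cap \D(G')$ finite central. The group $\D(G')$ is a central quotient of a product of simple simply connected groups $\prod_j \tilde H_j$. If some simple factor $\tilde H_j$ has rank $\geq 2$, then it already contains a copy of $\GL_2$ or $\SL_3$ (e.g.\ a Levi of a parabolic, or a subsystem subgroup); in type $A$ this gives $\GL_2$ directly, and in general a rank-$2$ simple group contains a subgroup of semisimple rank $1$ with nontrivial connected centre, i.e.\ $\GL_2$, unless one chases carefully — I would instead reduce to rank-$1$ factors by the following cleaner route. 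Pick a maximal torus $T'$ and a root $\alpha$, and let $M_\alpha = \la T', U_\alpha, U_{-\alpha}\ra$ be the corresponding semisimple-rank-$1$ subgroup; this is a split reductive group of semisimple rank $1$, so by \cite[Thm.\ 21.55]{milneAGS} it is isomorphic to $\SL_2 \times T'_1$, $\GL_2 \times T'_1$, or $\PGL_2 \times T'_1$ for a torus $T'_1$. If for some choice of $\alpha$ we land in the $\GL_2$ or $\PGL_2$ case we are done (alternative (i) holds). So assume that for \emph{every} root $\alpha$, $M_\alpha \cong \SL_2 \times T'_1$. I then claim this forces $G' \cong (\SL_2)^r \times S'$.

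To finish under that assumption: the condition $M_\alpha \cong \SL_2 \times (\text{torus})$ for every $\alpha$ says exactly that the coroot $\alpha^\vee$ spans a direct $\mathbb{G}_m$-factor of $T'$ — equivalently, the coroot lattice is a direct summand of the cocharacter lattice $X_*(T')$ in a way compatible with each individual coroot, and moreover each $U_\alpha$ with its $T'$-action detaches. More concretely, in each such $M_\alpha$ the subgroup $\la U_\alpha, U_{-\alpha}\ra$ is a copy of $\SL_2$ that is \emph{normal} in $M_\alpha$ (it is a direct factor), hence centralized by the complementary torus $T'_1$; running over all $\alpha$ and using that the $M_\alpha$ generate $G'$, one gets that the various $\SL_2$'s $\la U_\alpha, U_{-\alpha}\ra$ commute with each other modulo checking they are distinct for distinct positive roots $\alpha$ — which holds because if two positive roots $\alpha,\beta$ gave the same $\SL_2$, the root system on that rank-$1$ subgroup would have more than two roots, impossible. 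Since $\D(G')$ is generated by the $\la U_\alpha, U_{-\alpha}\ra$, it follows that $\D(G') \cong (\SL_2)^r$ (a direct product, as the factors commute and intersect trivially in the simply connected situation forced here), and then $G' = S' \cdot \D(G')$ with the two factors commuting; finally the central isogeny $S' \times (\SL_2)^r \to G'$ must be an isomorphism because $\SL_2$ has no nontrivial central quotient splitting off a torus direction — any kernel would be contained in $\mu_2^r \times (\text{something in } S')$ and would produce a $\PGL_2$ or $\GL_2$ factor, contradicting our standing assumption. Hence $G' \cong (\SL_2)^r \times S'$, giving alternative (ii).

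The main obstacle, and the step requiring genuine care rather than routine checking, is the passage from ``every rank-$1$ Levi-type subgroup $M_\alpha$ is $\SL_2 \times \text{torus}$'' to ``$G'$ is globally a direct product $(\SL_2)^r \times S'$'': one must verify that the local direct-product splittings are mutually compatible and assemble into a global one, i.e.\ that the coroot sublattice splits off $X_*(T')$ as a sublattice \emph{generated by a subset of a basis} and that no central isogeny intervenes. I expect to handle this by the lattice-theoretic reformulation (the root datum of $G'$ is a direct sum of rank-$1$ pieces $(\mathbb{Z}, \{\alpha\}, \mathbb{Z}, \{\alpha^\vee\})$ with $\la \alpha, \alpha^\vee\ra = 2$ plus a split torus), invoking the classification of split reductive groups by root data \cite[Thm.\ 21.55 and surrounding material]{milneAGS}, rather than by direct group-theoretic manipulation; alternatively one can cite that $G'$ has semisimple rank equal to its number of simple factors all of type $A_1$ and then apply Lemma~\ref{lem:GL2_crit} factor by factor to each rank-$1$ Levi to rule out $\GL_2$ and $\PGL_2$.
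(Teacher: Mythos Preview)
Your approach is correct in outline but takes a genuinely different route from the paper's. The paper argues by elimination on root-system types: it observes that any subgroup of type $A_2$ (hence also $G_2$, which contains an $A_2$) already contains a $\GL_2$ Levi, then handles the remaining rank-$2$ possibility $B_2$ by inspecting its long-root Levi (which is $\GL_2$ in the simply connected form and $\mathbb{G}_m\times\PGL_2$ in the adjoint form). This forces every simple factor of $G'$ to be of type $A_1$, hence isomorphic to $\SL_2$ (a $\PGL_2$ factor would itself violate the hypothesis). Finally it applies Lemma~\ref{lem:GL2_crit} to the subgroup $H_1'T'$, where $T'$ is a maximal torus of $H_2'\times\cdots\times H_r'\times S'$, to conclude that the central isogeny $H_1'\times\cdots\times H_r'\times S'\to G'$ is an isomorphism. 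Your strategy instead applies the semisimple-rank-$1$ classification uniformly to every $M_\alpha=\langle T',U_\alpha,U_{-\alpha}\rangle$, which is more conceptual but transfers all the content to the global assembly step.

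One caution: your ``more concretely'' paragraph, arguing directly that the subgroups $\langle U_\alpha,U_{-\alpha}\rangle$ pairwise commute, does not work as written---it tacitly assumes the root system is already of type $A_1^r$, which is what you are trying to establish. The lattice-theoretic alternative you flag is the right fix and does go through: from $M_\alpha\cong\SL_2\times(\text{torus})$ for every $\alpha$ one extracts $\alpha/2\in X^*(T')$ for every root; then $\langle\beta,\alpha^\vee\rangle$ must be even for all roots $\alpha,\beta$ (else $\langle\beta/2,\alpha^\vee\rangle\notin\mathbb{Z}$), forcing the root system to be $A_1^r$ since every irreducible rank-$\geq 2$ system has a pair of simple roots with odd pairing. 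Setting $\omega_i:=\alpha_i/2$ gives $\langle\omega_i,\alpha_j^\vee\rangle=\delta_{ij}$, so the coroot lattice is a direct summand of $X_*(T')$ and the root datum decomposes as $(\SL_2)^r\times S'$. Your mutual-exclusivity sketch is also a bit loose but can be made precise; the paper's proof does not address that direction explicitly.
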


\begin{proof}
Suppose $G'$ has no subgroup isomorphic to $\GL_2$ or $\PGL_2$. If $G'$ contains a subgroup of type $A_2$ then $G'$ contains a copy of $\GL_2$, which is impossible; note that this also implies $G'$ does not contain a subgroup of type $G_2$.  If $G'$ has a simple factor $H'$ of type $B_2$ then the long-root Levi subgroup of $H'$ is isomorphic to $\GL_2$ if $H'$ is simply connected, and to ${\mathbb G}_m\times \PGL_2$ if $H'$ is adjoint, which gives a contradiction in both cases.  So every simple factor of $G'$ is isomorphic to $\SL_2$.
 
Let $H_1',\ldots, H_r'$ be the simple factors of $G'$ and let $S'= Z(G')^0$.  Multiplication gives a central isogeny $\varphi\colon H_1'\times\cdots \times H_r'\times S'\to G'$. 
Suppose $\varphi$ is not an isomorphism.  Then the projection of $\ker(\varphi)$ to $H_1'$, say, must be non-trivial. Let $T'$ be a split maximal torus of $H_2'\times\dots\times H_r'\times S'$. Since $\ker(\varphi)$ is contained in a maximal torus of $H_1'\times T'$, we must have that $\varphi$ is not injective on $H_1'\times T'$. It follows from Lemma~\ref{lem:GL2_crit} that $H_1'T'\subseteq G'$ contains a copy of $\GL_2$.
\end{proof}

We can now prove Theorem \ref{maintheorem} for the case of Weil restrictions of reductive groups. Note that such groups are of course standard pseudo-reductive, taking $C=\R_{k'/k}(T')$, and $\phi$ an isomorphism.

\begin{prop}\label{prop:Weil_class} Theorem \ref{maintheorem} holds when $G=\R_{k'/k}(G')$.\end{prop}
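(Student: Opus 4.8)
The plan is to reduce, via the decomposition $k' = \prod_i k_i$ discussed after Lemma~\ref{reducetosplit} and via Lemma~\ref{reducetosplit}, to the case where $k$ is separably closed and $k'$ is a purely inseparable field extension of $k$, so that $G' = \prod_i G_i'$ is a product of split reductive groups and $\cl(\RR_u(G_{\bar k})) = \max_i \cl(\RR_u(\R_{k_i/k}(G_i')_{\bar k}))$. Thus it suffices to treat a single purely inseparable extension $k'/k$ and a single split reductive $k'$-group $G'$, and to show that $\cl(\RR_u(\R_{k'/k}(G')_{\bar k})) = \ell$, where $\ell = n-1$ with $\m^n = 0$ if $G'$ is not unusual, and $\ell$ is the minimal $n$ with $({}^2\m)^{n-1}\cdot\m^2 = 0$ if $G'$ is unusual (with $\ell = 1$ in the commutative case, which is vacuous here since $G$ is non-commutative).

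The upper bound is already essentially in hand. If $G'$ is not unusual, Proposition~\ref{lem:mofo} gives $\cl(\RR_u(G_{\bar k})) \le n-1$. If $G'$ is unusual, i.e. $p=2$ and $G' \cong \SL_2^r \times S'$ with $S'$ a torus and $r>0$, one needs to extend Lemma~\ref{lem:sl2p2lem} from $\SL_2$ to $\SL_2^r \times S'$: since $\R_{k'/k}$ commutes with products and the torus factor contributes nothing to the unipotent radical, $\RR_u(\R_{k'/k}(G')_{\bar k})$ is a product of $r$ copies of $\RR_u(\R_{k'/k}(\SL_2)_{\bar k})$, whose nilpotency class is the common value computed in Lemma~\ref{lem:sl2p2lem}. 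So the upper bound $\cl(\RR_u(G_{\bar k})) \le \ell$ holds in all cases.

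For the lower bound, the strategy is to find inside $G'$ a small subgroup whose Weil restriction already has unipotent radical of nilpotency class $\ell$, and then to transport this up using the inclusion of unipotent radicals noted after the adjunction formula: if $H'$ is a connected reductive subgroup of $G'$ then $\RR_{u,k'}(\R_{k'/k}(H')_{k'}) \subseteq \RR_{u,k'}(\R_{k'/k}(G')_{k'})$, whence $\cl(\RR_u(\R_{k'/k}(H')_{\bar k})) \le \cl(\RR_u(\R_{k'/k}(G')_{\bar k}))$. Apply Lemma~\ref{lem:no_GL2_or_PGL_2}: either (i) $G'$ contains a copy of $\GL_2$ or $\PGL_2$, in which case Proposition~\ref{prop:casegl2orpgl2} gives a subgroup whose Weil restriction has unipotent radical of nilpotency class $n-1$, and since $G'$ is then not unusual ($\GL_2$ and $\PGL_2$ do not sit inside $\SL_2^r \times S'$), this matches $\ell = n-1$; or (ii) $G' = \SL_2^r \times S'$, which with $p = 2$ means $G'$ is unusual (and with $p \ne 2$ puts us back in case (i) since then $\SL_2$ itself does the job by Proposition~\ref{prop:casegl2orpgl2}), and then the $\SL_2$ factor together with Lemma~\ref{lem:sl2p2lem} supplies nilpotency class $\ell$ exactly. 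In either case the lower bound $\cl(\RR_u(\R_{k'/k}(G')_{\bar k})) \ge \ell$ holds, completing the proof.

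The main obstacle I anticipate is bookkeeping rather than conceptual: one must check carefully that the value $\ell$ attached to $G'$ in the statement of Theorem~\ref{maintheorem} (via whether $G'$ is unusual) agrees, in each branch of the Lemma~\ref{lem:no_GL2_or_PGL_2} dichotomy, with the value produced by the relevant lemma --- in particular that an unusual $G'$ in characteristic $2$ genuinely falls in case (ii) and never contains $\GL_2$ or $\PGL_2$, and conversely that when $G'$ contains $\GL_2$ or $\PGL_2$ it is not unusual so that the $\m^n$-based invariant is the right one. The reduction of the product case for unusual $G'$ to the single $\SL_2$ computation, and the passage between $k'$-forms and geometric unipotent radicals when extending scalars through the subgroup inclusions, are routine given the machinery already assembled.
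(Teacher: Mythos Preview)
Your proposal is correct and follows essentially the same approach as the paper: reduce to a single purely inseparable extension via Lemma~\ref{reducetosplit}, then split into the unusual/not-unusual cases using Lemma~\ref{lem:no_GL2_or_PGL_2}, invoking Proposition~\ref{lem:mofo} and Proposition~\ref{prop:casegl2orpgl2} for the upper and lower bounds in the non-unusual case, and Lemma~\ref{lem:sl2p2lem} in the unusual case. One small slip: in the unusual case the torus factor $S'$ does contribute a (commutative) piece to $\RR_u(\R_{k'/k}(G')_{\bar k})$, so the latter is a product of $r$ copies of $\RR_u(\R_{k'/k}(\SL_2)_{\bar k})$ times $\RR_u(\R_{k'/k}(S')_{\bar k})$; but since this extra factor is abelian it does not affect the nilpotency class, and your conclusion stands.
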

\begin{proof}
By Lemma \ref{reducetosplit} we may assume $k=k_s$.
Since $G$ is a direct product of $\R_{k_i/k}(G_i')$ for some inseparable field extensions $k_i/k$ and fibres $G_i'$ of $G'$ over $k'$, we will clearly have $\cl(\RR_u(G_{\bar k}))=\max_i(\RR_u(\R_{k_i/k}(G_i')_{\bar k}))$. Hence it suffices to show that the nilpotency class of the unipotent radical of $G_i:=\R_{k_i/k}(G_i')$ is the number $\ell_i$ from the introduction.
The case that $G_i'$ is commutative is obvious, so assume $G_i'$ is non-commutative. It is of course split since $k=k_s$. If $G_i'$ is not unusual, then Lemma \ref{lem:no_GL2_or_PGL_2} says that $G_i'$ contains a copy of $\PGL_2$ or $\GL_2$. In this case Proposition \ref{prop:casegl2orpgl2} gives $\cl(\RR_u((G_i)_{\bar k}))\geq \ell_i$ and Proposition \ref{lem:mofo} gives $\cl(\RR_u((G_i)_{\bar k}))\leq \ell_i$. Lastly, if $G_i'$ is unusual, then $G_i$ is isomorphic to $\R_{k_i/k}(\SL_2)^r\times \R_{k_i/k}(S')$ for some torus $S'$ and some $r\geq 1$, by Lemma~\ref{lem:no_GL2_or_PGL_2}. Then the result follows from Lemma \ref{lem:sl2p2lem}.
\end{proof}

\subsection{Centrally smooth enlargements}
\label{subsec:hatty}

The following is essentially a definition from \cite{Mar99}.

\begin{defn}
 Let $G'$ be a reductive $k'$-group, where $k'$ is a field.  A {\em centrally smooth enlargement} of $G'$ is a reductive $k'$-group $\widehat{G'}$ such that $Z(\widehat{G'})$ is smooth, $G'\subseteq \widehat{G'}$ and $\widehat{G'}$ is generated by $G'$ and a central torus.
\end{defn}

\begin{lemma}
\label{lem:hatty}
 Let $G'$ be a split reductive $k'$-group.  There exists a centrally smooth  enlargement $\widehat{G'}$ of $G'$.
\end{lemma}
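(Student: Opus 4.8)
The plan is to realise $\widehat{G'}$ as a quotient $(G'\times S)/Z$, where $S$ is an auxiliary split torus and $Z=Z(G')$ is embedded anti-diagonally; the torus $S$ is chosen precisely to absorb the infinitesimal and disconnected part of $Z(G')$, which is the only obstruction to $Z(G')$ being smooth.

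First I would isolate the object to be repaired. Since $G'$ is split reductive, the scheme-theoretic centre $Z:=Z(G')$ is a split group of multiplicative type, with character group $\Lambda:=X^*(Z)$ a finitely generated abelian group; the torsion of $\Lambda$ is exactly what can make $Z$ non-smooth. Choosing a finite generating set of $\Lambda$ gives a surjection $\phi\colon\Z^n\twoheadrightarrow\Lambda$, and dualising yields a closed immersion $\iota\colon Z\hookrightarrow S$ into the split torus $S:=\Gm^n$.

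Next I would set $\overline{Z}:=\{(z,\iota(z)^{-1}):z\in Z\}$, a central closed subgroup scheme of $G'\times S$ (central because the first coordinate is central in $G'$ and the second in $S$), and define $\widehat{G'}:=(G'\times S)/\overline{Z}$. Then come the routine structural checks: (a) $\widehat{G'}$ is smooth, connected and reductive, being a quotient of the reductive group $G'\times S$ by a closed normal subgroup; (b) the composite $G'\hookrightarrow G'\times S\to\widehat{G'}$ has trivial kernel, since $(G'\times 1)\cap\overline{Z}=1$ by injectivity of $\iota$, hence is a closed immersion and we may regard $G'\subseteq\widehat{G'}$; (c) the image $\overline{S}$ of $1\times S$ in $\widehat{G'}$ is a central torus (again $(1\times S)\cap\overline{Z}=1$, so $\overline{S}\cong S$), and $G'$ together with $\overline{S}$ generate $\widehat{G'}$ because $G'\times 1$ and $1\times S$ generate $G'\times S$.

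It remains to see that $Z(\widehat{G'})$ is smooth, which is the heart of the matter. The key identity is $Z(\widehat{G'})=Z(G'\times S)/\overline{Z}=(Z\times S)/\overline{Z}$. Justifying that the centre of the quotient equals the quotient of the centre is the one genuinely non-formal point: because $\overline{Z}$ is \emph{not} finite one cannot argue naively, but if $g\in G'\times S$ has central image then all its commutators lie in $\overline{Z}$, and since such commutators also lie in $\D(G'\times S)$ and $\overline{Z}\cap\D(G'\times S)$ is finite, the commutator morphism $h\mapsto[g,h]$ maps the connected group $G'\times S$ into a finite set while sending the identity to the identity, hence is trivial; so $g$ is genuinely central. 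Granting this, I would pass to character groups: $X^*(Z\times S)=\Lambda\oplus\Z^n$, the inclusion of $\overline{Z}$ corresponds to the surjection $(\lambda,f)\mapsto\lambda-\phi(f)$, and its kernel $\{(\phi(f),f):f\in\Z^n\}\cong\Z^n$ is free. Thus $(Z\times S)/\overline{Z}$ is a diagonalisable group with torsion-free character lattice, i.e.\ a torus, so $Z(\widehat{G'})$ is a torus and in particular smooth. Everything except this centre computation is bookkeeping with quotients of algebraic groups over a field.
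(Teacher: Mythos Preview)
Your proposal is correct. The paper's own proof is simply a citation of \cite[Thm.~4.5]{Mar99}, so in that sense you take a different route; however, your construction is exactly the one sketched in the Remark immediately following the lemma in the paper, where one embeds $Z(G')$ into a torus $S'$ and sets $\widehat{G'}=(G'\times S')/N$ with $N$ the anti-diagonal copy of $Z(G')$. You supply considerably more detail than the Remark does, in particular the verification that $Z(\widehat{G'})$ is a torus via the character-lattice computation, which the paper leaves to the reader.

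One small simplification: your argument that $Z(\widehat{G'})=(Z\times S)/\overline{Z}$ via the finiteness of $\overline{Z}\cap\D(G'\times S)$ and connectedness is correct but unnecessarily indirect. Since $S$ is central, every commutator in $G'\times S$ already lies in $G'\times 1$, and $\overline{Z}\cap(G'\times 1)=1$ scheme-theoretically (because the second projection $\overline{Z}\to S$ is the injective map $z\mapsto\iota(z)^{-1}$). Hence any commutator lying in $\overline{Z}$ is trivial, and this works functorially without appeal to connectedness.
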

\begin{proof}
 This follows from \cite[Thm.\ 4.5]{Mar99}.  The result in {\em loc.\ cit.} is stated only for algebraically closed fields, but the proof works for arbitrary split reductive groups, by the existence and isogeny theorems for split reductive groups \cite[16.3.2,\ 16.3.3]{Spr98}.
\end{proof}

\begin{remark} There is also another, simpler, proof which works for the non-split case as well: embed $Z(G')$ in a torus $S'$: for example, we could choose $S'$ to be a maximal torus of $G'$.  Then set $\widehat{G'}= (G'\times S')/N$, where $N$ is the image of $Z(G')$ under the obvious diagonal embedding. (Since $N\cap (1\times S')=1$, $S'$ appears as a subgroup of $\widehat{G'}$.) \end{remark}

Let $(G', k'/k, T', C)$ be a 4-tuple corresponding to a standard pseudo-reductive group \[G= (\R_{k'/k}(G')\rtimes C)/\R_{k'/k}(T'),\] with a factorisation $\R_{k'/k}(T')\stackrel{\phi}{\to}C\stackrel{\psi}{\to}\R_{k'/k}(T'/Z_{G'})$.  Assume $k'$ is a field.  Suppose $\widehat{G'}= G'S'$ is a central enlargement of $G'$, where $S'$ is a central torus of $\widehat{G'}$. 
 Set $Z'= Z(G')$, $\widehat{Z'}= Z(\widehat{G'})$ and $\widehat{T'}= T'S'$. It is easy to see that $\widehat{Z'}=Z'S'\subseteq G'S'$. Therefore the composition $T'\to \widehat{T'}\to \widehat{T'}/\widehat{Z'}$ induces an isomorphism $\beta\colon T'/Z'\to \widehat{T'}/\widehat{Z'}$. 

\begin{lemma}
\label{lem:hatty_bound}
 $\cl(\RR_{u,k'}(G_{k'}))\leq \cl(\RR_{u,k'}(\R_{k'/k}(\widehat{G'})_{k'}))$.
\end{lemma}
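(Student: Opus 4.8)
The plan is to deduce this directly from Lemma~\ref{reducetoweilrest}(i) together with the monotonicity of $k'$-unipotent radicals of Weil restrictions under passage to reductive subgroups, which is recalled in Section~2; in particular no further structural analysis of $\widehat{G'}$ (such as a standard presentation for it) is needed. As a preliminary step I would reduce to the case that $k'/k$ is a finite purely inseparable field extension: extending scalars to $k_s$ and applying Lemma~\ref{reducetosplit}, $G$ is replaced by the standard pseudo-reductive group attached to $(G'_{k_s'},k_s'/k_s,T'_{k_s'},C_{k_s})$, with $k_s'=\prod_i k_i$, each $k_i/k_s$ purely inseparable, $G'_{k_s'}=\prod_i G_i'$ and $\widehat{G'}_{k_s'}=\prod_i\widehat{G_i'}$, where each $\widehat{G_i'}:=\widehat{G'}_{k_i}$ is again a centrally smooth enlargement of $G_i'$ (smoothness of the centre, the inclusion $G_i'\subseteq\widehat{G_i'}$, and the generation condition all survive base change). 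Since the $k_s'$-unipotent radicals under consideration decompose as products over the $k_i$, and the nilpotency class of a direct product is the maximum of the classes of the factors, it suffices to treat one factor at a time.

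Assuming now that $k'/k$ is a finite purely inseparable field extension, I would establish the chain
\[
\cl(\RR_{u,k'}(G_{k'}))\ \le\ \cl(\RR_u(G_{\bar k}))\ \le\ \cl(\RR_u(\R_{k'/k}(G')_{\bar k}))\ =\ \cl(\RR_{u,k'}(\R_{k'/k}(G')_{k'}))\ \le\ \cl(\RR_{u,k'}(\R_{k'/k}(\widehat{G'})_{k'})),
\]
whose outer terms are the two sides of the lemma. The first inequality holds because $\RR_{u,k'}(G_{k'})$ is a $k'$-defined smooth connected normal unipotent subgroup of $G_{k'}$, so its scalar extension to $\bar k$ is contained in $\RR_u(G_{\bar k})$, while extending the ground field does not change the nilpotency class. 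The second inequality is exactly Lemma~\ref{reducetoweilrest}(i) applied to the $4$-tuple $(G',k'/k,T',C)$. The equality holds because, by \cite[Thm.\ 1.6.2]{CGP15}, the geometric unipotent radical $\RR_u(\R_{k'/k}(G')_{\bar k})$ is already defined over $k'$ --- it equals $\ker q_{G'}=\RR_{u,k'}(\R_{k'/k}(G')_{k'})$ --- and the class is again insensitive to field extension. The last inequality holds because $G'$ is a connected reductive subgroup of $\widehat{G'}$, so by the naturality of the maps $q_{(-)}$ one has $\ker q_{G'}\subseteq\ker q_{\widehat{G'}}$ as subgroups of $\R_{k'/k}(\widehat{G'})_{k'}$, and the lower central series of a subgroup is contained term-by-term in that of the ambient group. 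Concatenating the chain yields $\cl(\RR_{u,k'}(G_{k'}))\le\cl(\RR_{u,k'}(\R_{k'/k}(\widehat{G'})_{k'}))$, as required.

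The step needing a little care is the invocation of Lemma~\ref{reducetoweilrest}(i): as stated that result assumes $G'$ is semisimple, whereas the $G'$ of a $4$-tuple is \emph{a priori} only reductive. This causes no difficulty, since the proof of that lemma uses only the tautology that a smooth connected affine $\bar k$-group has reductive quotient by its unipotent radical, together with the fact that $C$ acts on $\R_{k'/k}(G')$ through $\R_{k'/k}(T'/Z_{G'})$ --- neither of which is affected by dropping semisimplicity. In any event, where this bound gets used in the proof of Theorem~\ref{maintheorem} one has already passed to a standard presentation with simply connected, hence semisimple, fibres. The remaining ingredients are routine bookkeeping about extension of scalars and about the product decomposition of $k'$-unipotent radicals of Weil restrictions over $\Spec k_s'$.
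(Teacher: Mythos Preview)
Your proof is correct and takes a genuinely different, more economical route than the paper's. The paper builds an auxiliary $4$-tuple $(\widehat{G'},k'/k,\widehat{T'},\widehat{C})$: it defines $\widehat{C}=(C\times\R_{k'/k}(\widehat{T'}))/N$, verifies that $\widehat{C}$ is pseudo-reductive, constructs compatible maps $\widehat{\phi},\widehat{\psi}$, and then checks that $\R_{k'/k}(i_{G'})\times j$ embeds $\R_{k'/k}(G')\rtimes C$ into $\R_{k'/k}(\widehat{G'})\rtimes\widehat{C}$. Only then does it invoke Lemma~\ref{reducetoweilrest}(ii) for this new tuple---which is precisely where the smoothness of $\widehat{Z'}$ is used---to pass to $\R_{k'/k}(\widehat{G'}/\widehat{Z'})$ and finally to $\R_{k'/k}(\widehat{G'})$. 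You bypass all of this by applying Lemma~\ref{reducetoweilrest}(i) directly to the original $4$-tuple to get $\cl(\RR_u(G_{\bar k}))\le\cl(\RR_u(\R_{k'/k}(G')_{\bar k}))$, and then using the inclusion $\ker q_{G'}\subseteq\ker q_{\widehat{G'}}$ from Section~2 to pass from $G'$ to $\widehat{G'}$. Your argument never uses that $Z(\widehat{G'})$ is smooth; it only needs $G'\subseteq\widehat{G'}$ with $\widehat{G'}$ reductive. Your observation that the proof of Lemma~\ref{reducetoweilrest}(i) does not use semisimplicity is accurate. What the paper's longer route buys is the explicit $4$-tuple for $\widehat{G'}$, flagged in the introduction as possibly of independent interest; for the bare inequality of the lemma, your argument is cleaner.
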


\begin{proof}
The idea is to embed $\R_{k'/k}(G')\rtimes C$ in a group of the form $\R_{k'/k}(\widehat{G'})\rtimes \widehat{C}$ arising from a 4-tuple $(\widehat{G'}, k'/k, \widehat{T'}, \widehat{C})$ and a factorisation $\R_{k'/k}(\widehat{T'})\stackrel{\widehat{\phi}}{\to}\widehat{C}\stackrel{\widehat{\psi}}{\to}\R_{k'/k}(\widehat{T'}/\widehat{Z'})$; the latter group is easier to understand than the former because $\widehat{Z'}$ is smooth.  Define a homomorphism $\kappa\colon \R_{k'/k}(T')\to C\times \R_{k'/k}(\widehat{T'})$ by $\kappa= \phi\times \iota^{-1}$, where $\iota\colon \R_{k'/k}(\widehat{T'})\to \R_{k'/k}(\widehat{T'})$ is inversion and we identify $\R_{k'/k}(T')$ naturally as a subgroup of $\R_{k'/k}(\widehat{T'})$, say via a map $i_{T'}$. 
Note that $\kappa$ maps $\R_{k'/k}(T')$ isomorphically onto its image, which we denote by $N$. 
Set $\widehat{C}= (C\times \R_{k'/k}(\widehat{T'}))/N$ and let $\pi_{\widehat C}\colon C\times \R_{k'/k}(\widehat{T'})\to \widehat{C}$ be the canonical projection.  
We claim that $\widehat{C}$ is pseudo-reductive.
To see this, let $V$ denote the preimage of $\RR_{u,k}(\widehat{C})$ in $C\times \R_{k'/k}(\widehat{T'})$.
Projection onto the second factor here maps $N$ isomorphically onto the natural copy of $\R_{k'/k}(T')$ in the second factor.
Hence if we project to the second factor and then quotient by this copy of $\R_{k'/k}(T')$, the subgroup $V$ gets mapped to a smooth unipotent $k$-subgroup of the commutative group $\R_{k'/k}(\widehat{T'})/\R_{k'/k}(T')$.
Since this last group is a subgroup of the commutative pseudo-reductive group $\R_{k'/k}(\widehat{T'}/T')$, it is pseudo-reductive,
and hence we conclude that the projection of $V$ to the second factor of $C\times \R_{k'/k}(\widehat{T'})$
is contained in the projection of $N$.
Hence $V\subseteq C\times \R_{k'/k}(T')$.

We have a  map $\psi:C\times \R_{k'/k}(T') \to C$ defined by $\psi(c,x) = c\phi(x)^{-1}$ for $c\in C(A)$ and $x\in \R_{k'/k}(T')(A)$, where $A$ is any $k$-algebra.  Clearly the kernel of $\psi$ is $N$, which is smooth, so $\psi$ is smooth.  Hence $\psi(V)$ is a smooth unipotent $k$-subgroup of $C$, and is therefore trivial.  It follows that $V\subseteq N$, so $\RR_{u,k}(\widehat{C})=1$ as required.

We have obvious inclusions $i_C\colon C\to C\times \R_{k'/k}(\widehat{T'})$ and $i'\colon \R_{k'/k}(\widehat{T'})\to C\times \R_{k'/k}(\widehat{T'})$.  Define $j\colon C\to \widehat{C}$ by $j= \pi_{\widehat C}\circ i_C$; then $j$ is an embedding of $C$ in $\widehat{C}$ since $N\cap (C\times 1)= 1$. Now define $\widehat{\phi}\colon \R_{k'/k}(\widehat{T'})\to \widehat{C}$ by $\widehat{\phi}= \pi_{\widehat C}\circ i'$.  It follows from the construction that $j\circ \phi= \widehat{\phi}\circ \R_{k'/k}(i_{T'})$. 

 Consider the homomorphism $C\times \R_{k'/k}(\widehat{T'})\to \R_{k'/k}(\widehat{T'}/\widehat{Z'})$ given by the composition
 $$ C\times \R_{k'/k}(\widehat{T'})\stackrel{\psi\times{\rm id}}{\longrightarrow} \R_{k'/k}(T'/Z')\times \R_{k'/k}(\widehat{T'}) \stackrel{\R_{k'/k}(\beta)\times \R_{k'/k}(\pi_{\widehat{T'}})}{\longrightarrow}\R_{k'/k}(\widehat{T'}/\widehat{Z'})\times \R_{k'/k}(\widehat{T'}/\widehat{Z'})\to \R_{k'/k}(\widehat{T'}/\widehat{Z'}), $$
 where the third map is multiplication and $\pi_{\widehat{T'}}\colon \widehat{T'}\to \widehat{T'}/\widehat{Z'}$ is the canonical projection.  The kernel of this homomorphism contains $N$, so we get an induced homomorphism $\widehat{\psi}\colon \widehat{C}\to \R_{k'/k}(\widehat{T'}/\widehat{Z'})$.  It follows from the construction that $\widehat{\psi}\circ j= \R_{k'/k}(\beta)\circ \psi$.  One can now check that the map $\R_{k'/k}(i_{G'})\times j\colon \R_{k'/k}(G')\rtimes C\to \R_{k'/k}(\widehat{G'})\rtimes \widehat{C}$ is an embedding of groups, where $i_{G'}$ is the inclusion of $G'$ in $\widehat{G'}$.
 
 Now $G$ is a quotient of $\R_{k'/k}(G')\rtimes C$, so
 $$ \cl(\RR_{u,k'}(G_{k'}))\leq \cl(\RR_{u,k'}((\R_{k'/k}(G')\rtimes C)_{k'}))\leq \cl(\RR_{u,k'}((\R_{k'/k}(\widehat{G'})\rtimes \widehat{C})_{k'})) $$
 $$ = \cl(\RR_{u,k'}(\R_{k'/k}(\widehat{G'}/\widehat{Z'})_{k'}))\leq \cl(\RR_{u,k'}(\R_{k'/k}(\widehat{G'})_{k'})); $$
 here the equality follows from Lemma~\ref{reducetoweilrest}(ii) applied to $(\widehat{G'}, k'/k, \widehat{T'}, \widehat{C})$, and the final inequality holds because the canonical projection $\widehat{G'}\to \widehat{G'}/\widehat{Z'}$ is smooth.  This proves the result.
 \end{proof}

\begin{proof}[Proof of Theorem \ref{maintheorem}] As in the proof of Proposition \ref{prop:Weil_class} we may reduce to the case that $k=k_s$ and by passing to a fibre of $G'$ above $k'$, we may assume $k'/k$ is a purely inseparable field extension. Certainly we are done if $G'$ is commutative, so assume otherwise. 

If $G'$ is unusual, then by assumption, the map $\R_{k'/k}(T')\to C$ is a monomorphism. 
As in the proof of Lemma \ref{reducetoweilrest} we have that the semidirect product $\R_{k'/k}(G')\rtimes C$ is actually isomorphic to a direct product via $(g,d)\mapsto (gd,d)$, and so the quotient by the central subgroup $N:=\R_{k'/k}(T')\subseteq \R_{k'/k}(G')\rtimes C$ from the standard construction gives the group $\R_{k'/k}(G')\times C'$, where $C'\cong C/\R_{k'/k}(T')$. In particular, by Proposition~\ref{prop:Weil_class} the nilpotency class of the unipotent radical is as claimed. Hence we can assume $G'$ is not unusual.

We get an upper bound for $\cl(\RR_u(\R_{k'/k}(G')_{\bar k}))$ from Lemma~\ref{lem:hatty_bound}. Proposition~\ref{prop:Weil_class} applied to $\widehat{G'}$, where $\widehat{G'}$ is a central enlargement of $G'$, then gives the desired upper bound. 

To complete the proof, it is enough to show that $\cl(\RR_u(\R_{k'/k}(G')_{k'}))\leq \cl(\RR_u(G_{k'}))$. 

By Lemma \ref{lem:no_GL2_or_PGL_2} and Proposition~\ref{prop:Weil_class}, $G'$ has a subgroup $H'$ isomorphic to one of $\SL_2$, $\GL_2$ or $\PGL_2$ such that \[\cl(\RR_u(\R_{k'/k}(G')_{k'}))= \cl(\RR_u(\R_{k'/k}(H')_{k'})).\] Furthermore, if $p=2$, since $G'$ is not unusual, it follows that $H$ is $\GL_2$ or $\PGL_2$. We may regard $\R_{k'/k}(H')\subseteq \R_{k'/k}(G')$ as subgroups of $\R_{k'/k}(G')\rtimes C$ in the obvious way.  Let $\pi\colon \R_{k'/k}(G')\rtimes C\to G$ be the projection of the standard construction.  Now $\ker(\pi)$ is a smooth central subgroup of $\R_{k'/k}(G')\rtimes C$ and it follows from the definition of $\pi$ that $\ker(\pi)\cap \R_{k'/k}(G')$ is contained in the kernel of the map $\R_{k'/k}(T')\to \R_{k'/k}(T'/Z')$, which is $\R_{k'/k}(Z')$ by \cite[Prop.\ 1.3.4]{CGP15}.  So $M:= \ker(\pi)\cap \R_{k'/k}(H')$ is a central subgroup of $\R_{k'/k}(H')$.  Now $\R_{k'/k}(H')/M$ is a subgroup of $G$, so $\RR_u((\R_{k'/k}(H')/M)_{\bar k})$ is a subgroup of $\RR_u(G_{\bar k})$.  But if $Z_{H'}$ denotes the centre of $H'$ then
$$ \cl(\RR_u((\R_{k'/k}(H')/M)_{\bar k}))\geq \cl(\RR_u((\R_{k'/k}(H')/\R_{k'/k}(Z_{H'}))_{\bar k}))= \cl(\RR_u(\R_{k'/k}(H'/Z_{H'})_{\bar k})) $$ $$ =\cl(\RR_u(\R_{k'/k}(\PGL_2)_{\bar k})) $$
since $Z_{H'}$ is smooth, and we are done by Proposition~\ref{prop:Weil_class}.
\end{proof}

\subsection{When $k'/k$ is a primitive field extension.} Here we work out the consequences of Theorem \ref{maintheorem} in case $k'=k(t)$ for $q=p^e$ the order of $t$ in $k'/k$. As ever, we may assume $k$ is separably closed so that $k'/k$ is purely inseparable. In this situation, the ideal $\m=\la 1\otimes t - t\otimes 1\ra$ of $B$ is a principal ideal and it is easy to see that the minimal $n$ such that $\m^n=0$ is just $q$, since $(1\otimes t - t\otimes 1)^q=1\otimes t^q-t^q\otimes 1=t^q\otimes 1-t^q\otimes 1=0$. Hence the value of $N$ in Theorem \ref{maintheorem} is $q-1$. In particular, if $G'$ is not unusual or commutative, then $\cl(\R_{k'/k}(G'))=q-1$.

On the other hand, if $G'$ is unusual, then $p=2$ and ${}^{2}\m=\m^2$. From this one readily deduces that the value of $N$ in Theorem \ref{maintheorem} is just $q/2$ and in particular $\cl(\RR_u(\R_{k'/k}(G')_{\bar k}))=q/2$.

\begin{remark}
\label{rem:badstd}
Applying the standard construction to the Weil restriction of a reductive group can change the nilpotency class of the unipotent radical; in other words, it can happen that $\cl(\RR_u(\R_{k'/k}(G')_{\bar k}))$ is strictly less than $\cl(\RR_u(G_{\bar k}))$.  For instance, suppose $p= 2$ and $k'/k$ is a primitive purely inseparable field extension of degree $q\geq 4$.  Let $G'= \SL_2$, let $T'$ be a maximal torus of $G'$ and let $Z'= Z(G')$.  Let $C= \R_{k'/k}(T'/Z')$ and take the factorisation $\R_{k'/k}(T')\to C\to R_{k'/k}(T'/Z')$ to be the obvious one.  Then $G= (\R_{k'/k}(G')\rtimes C)/\R_{k'/k}(T')\cong \R_{k'/k}(\PGL_2)$, so $\cl(\RR_u(G_{k'}))= q-1$; but $\cl(\RR_u(\R_{k'/k}(G')_{k'}))= \cl(\RR_u(\R_{k'/k}(\SL_2)_{k'}))= \frac{q}{2}$ by the remarks above.

Note, however, that by Lemma~\ref{lem:hatty_bound}---which holds for an arbitrary standard pseudo-reductive group---the nilpotency class of $\RR_u(G_{\bar k})$ is bounded above by $n$, where $n$ is the smallest positive integer such that $\m^n$ vanishes.  This is the case even when $k'/k$ is not primitive.
\end{remark}

\section{Orders of elements in unipotent radicals of Weil restrictions}\label{sec:orders}

In this section we collect some observations about the orders of elements in unipotent radicals $\RR_u(G_{\bar k})(\bar k)$,
where $G = \R_{k'/k}(G')$ is the Weil restriction of a reductive group $G'$. More precisely, we wish to say something about the exponent $e$ of $G$; that is, the minimal integer such that the $p^e$-power map on $G$ factors through the trivial group. Clearly $e$ is stable under base change, so by Lemma \ref{reducetosplit} it suffices to consider the case where $k$ is separably closed. Of course one could let $k'$ be a non-zero reduced $k$-algebra, but by passing to a fibre, it does no harm to assume $k'/k$ is a purely inseparable field extension. 

Since $k'/k$ is purely inseparable, Proposition \ref{lem:mofo} provides a crude upper bound:
as usual, letting $B=k'\otimes_k k'$ with maximal ideal $\m$, we have a filtration of the unipotent radical $\RR_u(G_{\bar k})$ with $n$ abelian quotients, where $n$ is the minimal positive integer such that $\m^n = 0$.
Since we know each quotient is a vector group isomorphic to a direct sum of copies of the adjoint module for $G'$,
all elements in the quotients have order $p$, and we see that $p^n$ is an upper bound for the order of elements\
in $\RR_u(G)$.
Our next result shows that we can immediately do better than this with quite an elementary argument.

\begin{lemma}\label{lem:nbound}
With notation as above, let $s$ be minimal such that $p^s\geq n$. 
Then the exponent of $\RR_u(G)$ is at most $s$.
\end{lemma}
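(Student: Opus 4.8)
The plan is to reduce to an elementary matrix computation on the $k'$-form $\RR_{u,k'}(G_{k'})$ of the geometric unipotent radical. Since the exponent is stable under base change and, as recalled in the preliminaries, $\RR_{u,k'}(G_{k'})$ is a descent of $\RR_u(G_{\bar k})$ (using that $k'/k$ is purely inseparable and $G'$ is reductive), it is enough to show that the $p^s$-power morphism on $\RR_{u,k'}(G_{k'})$ is trivial. Following the identification used in the proof of Proposition~\ref{lem:mofo}, I would write $G_{k'}=\R_{B/k'}(G'_B)$, so that $\RR_{u,k'}(G_{k'})=\ker q_{G'}$; concretely, for a $k'$-algebra $R$ its $R$-points are the $g\in G'(B\otimes_{k'}R)$ reducing to the identity modulo the ideal $\m_R:=\m(B\otimes_{k'}R)$, the extension of $\m$. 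Note that $\m_R^{p^s}=(\m^{p^s})(B\otimes_{k'}R)=0$, since $p^s\geq n$ forces $\m^{p^s}=0$.

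Next I would fix a closed embedding $G'\hookrightarrow\GL_N$. Then any $g\in\RR_{u,k'}(G_{k'})(R)$ has the form $I+X$ with $X\in M_N(\m_R)$. The crux is the standard observation that, because $I$ is central, the binomial theorem gives $(I+X)^p=\sum_{j=0}^{p}\binom{p}{j}X^j$, and for $0<j<p$ the integer $\binom{p}{j}$ is divisible by $p$ and hence annihilates $X^j$, as $B\otimes_{k'}R$ is an $\F_p$-algebra. Thus $g^p=I+X^p$, and an immediate induction yields $g^{p^i}=I+X^{p^i}$ for all $i\geq 0$. Applying this with $i=s$: the entries of $X^{p^s}$ lie in $\m_R^{p^s}=0$, so $g^{p^s}=I$.

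Since this holds for every $k'$-algebra $R$ and every $R$-point, the $p^s$-power morphism on $\RR_{u,k'}(G_{k'})$ is the trivial morphism, and base-changing to $\bar k$ gives the claim. I do not expect any real obstacle here: the only points requiring a moment's care are that the cross-terms in $(I+X)^p$ vanish --- which uses that $I$ commutes with $X$ (automatic) together with the characteristic-$p$ hypothesis --- and that the nilpotence of $\m$ survives extension to $\m_R$, which is automatic since forming powers of ideals is compatible with the (flat) base change along $k'\to R$.
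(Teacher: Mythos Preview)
Your proof is correct and follows essentially the same approach as the paper: embed $G'$ in some $\GL_N$, write elements of the unipotent radical as $I+X$ with $X$ having entries in the nilpotent ideal, and observe that $(I+X)^{p^s}=I+X^{p^s}=I$ once $p^s\geq n$. The only cosmetic difference is that the paper works directly with $\bar k$-points (invoking smoothness of $G$ to justify this), whereas you work functorially over all $k'$-algebras $R$; your version is slightly more detailed in spelling out the binomial expansion, but the argument is the same.
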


\begin{proof}
Let $\barB$ and $\barm$ be as in Lemma~\ref{lem:sl2p2lem}.  Since $G$ is smooth, it is enough to consider elements of $G(\bar k)$, which we can identify with $G'(\barB)$.  By embedding $G'$ into $\GL_r$ for some $r$,  
we may assume that the elements of interest are matrices with entries in $\barB$, 
and we may identify points of the unipotent radical
as the kernel of the map $G'(\barB) \to G'(\bar k)$ given by reduction modulo $\barm$.
Then a typical element of the unipotent radical has the form $I+M$, where $I$ is the $r\times r$ identity matrix and $M$ is an $r\times r$ matrix with entries in $\barm$. 
We are therefore after a $p^{\rm th}$ power $p^s$ which kills all such matrices $M$---for then $(I+M)^{p^s} = I + M^{p^s} = I$ in all cases.

Now note that the entries of a power $M^{p^s}$ are homogeneous polynomials in the entries of $M$ of degree $p^s$.
Since $\barm^n = 0$, all such polynomials vanish as soon as $p^s$ is at least $n$.
Hence, if we choose $s$ as in the statement, we are done.
\end{proof}

The following example shows that the bound provided by Lemma \ref{lem:nbound} is sharp when the rank of $G'$ is large enough.

\begin{example}
Suppose $G' = \GL_n$ where $n$ is as above.
Then we can find elements $m_1,\ldots,m_{n-1} \in \barm$ such that $m_1\cdots m_{n-1} \neq 0$,
and the element
$$
x = \left(\begin{array}{ccccc}
0 & m_1 & 0 & \cdots & 0 \\
0 & 0 & m_2 & \cdots & 0 \\
\vdots & \vdots & \ddots & \ddots & \vdots \\
0 & 0 & 0 & \cdots &  m_{n-1} \\
0 & 0 & 0 & \cdots & 0 \\
\end{array}\right)
$$ 
has $x^{n-1} \neq 0$, so the bound in Lemma \ref{lem:nbound} cannot be improved in this case.
\end{example}

In contrast to the above example, we can see in some easy examples that the bound provided by Lemma \ref{lem:nbound} can be much 
too large.

\begin{example}\label{ex:rankbound}
(i). Suppose $G = \R_{k'/k}(\GL_2)$ when $p=2$ and $k'/k$ is a purely inseparable field extension of exponent $1$
(recall that the \emph{exponent} of such a purely inseparable extension is the minimal $e$ such that $x^{p^e} \in k$ for all $x\in k'$). 
The fact that $k'/k$ has exponent $1$ means in this case that $m^2 = 0$ for all $m\in \barm$, irrespective of the value of $n$. 
Therefore, if we consider any four elements $m_1,m_2,m_3,m_4\in \barm$ arranged in a $2\times 2$ matrix
$$
x=\left(\begin{array}{cc} m_1 & m_2\\ m_3 & m_4 \end{array}\right), 
$$
we have $x^4 = 0$, so the maximal order of elements in $\RR_u(G)$ is $4$ in this situation.
However, we can make the minimal $n$ such that $\barm^n = 0$ as large as we like (for example, 
by letting $k$ be a field of rational functions in several variables
$T_1,T_2,\ldots$ and then adjoining elements $t_i$ with $t_i^2 = T_i$ for each $i$). 

(ii). Suppose $G = \R_{k'/k}(\Gm)$.
If the exponent of the extension $k'/k$ is $e$, then because the $p^e$ power map sends $(k')^\times$ to $k^\times$,
we see that elements of $\RR_u(G)$ have order at most $p^e$, and some elements do indeed have this order (by the argument for ${\rm GL}_r$ above, taking $r=1$).
Hence, in this case, the exponent of $\RR_u(G)$ coincides with the exponent of the extension $k'/k$.
\end{example} 

Motivated by the examples above, which show dependence on the exponent of the extension $k'/k$, 
we finish with a bound for matrices which depends on that exponent and the rank of the matrices, rather than the number $n$.

\begin{lemma}\label{lem:rankbound}
Suppose $k'/k$ has exponent $e$, and let $G = \R_{k'/k}(\GL_r)$.
Then the exponent of $\RR_u(G)$ is at most $s$, 
where $s$ is chosen so that $p^s\geq r^2(p^e-1)$.
In particular, if $p\geq r^2$, then this exponent is at most $e+1$.
\end{lemma}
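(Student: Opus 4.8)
The plan is to run a direct matrix computation over $\barB = k'\otimes_k\bar k$, in the style of the proof of Lemma~\ref{lem:nbound}, but replacing the crude degree estimate used there by a pigeonhole count on walks in a graph.

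First I would reduce the assertion to a statement about nilpotent matrices. Since $G$ is smooth it is enough to bound the orders of elements of $G(\bar k)$, and $G(\bar k) = \GL_r(\barB)$ by the defining property of Weil restriction; exactly as in the proof of Lemma~\ref{lem:nbound}, $\RR_u(G)(\bar k)$ is then identified with the kernel of reduction modulo $\barm$, i.e.\ with the matrices $I+M$, where $M$ is an $r\times r$ matrix with entries in $\barm$. Because $\barB$ is commutative of characteristic $p$ and $I$ is central, the binomial theorem together with $p\mid\binom{p}{i}$ for $0<i<p$ gives $(I+M)^{p^s} = I + M^{p^s}$ for every $s$. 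Hence it suffices to prove that $M^{p^s}=0$ whenever $p^s\geq r^2(p^e-1)$.

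The one arithmetic ingredient is the observation already exploited in Example~\ref{ex:rankbound}: every $m\in\barm$ satisfies $m^{p^e}=0$. Indeed, $k'/k$ having exponent $e$ means $(k')^{p^e}\subseteq k$, so the (additive, multiplicative) $p^e$-power map carries $\barB = k'\otimes_k\bar k$ into $1\otimes\bar k$; as $\barm$ is an ideal with $\barm\cap(1\otimes\bar k)=0$, any $p^e$-th power lying in $\barm$ must vanish. Now, writing $N=p^s$, the $(i,j)$-entry of $M^N$ is $\sum M_{a_0a_1}M_{a_1a_2}\cdots M_{a_{N-1}a_N}$, summed over all sequences $i=a_0,a_1,\dots,a_N=j$ with each $a_t\in\{1,\dots,r\}$ — that is, over all walks of length $N$ in the complete directed graph with loops on $r$ vertices. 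Each such walk uses $N$ edges, and there are only $r^2$ edges available; so once $N>r^2(p^e-1)$, the pigeonhole principle forces some edge $(a,b)$ to be traversed at least $p^e$ times in every walk, and the corresponding product is then divisible in $\barB$ by $M_{ab}^{p^e}=0$. Hence $M^N=0$ in that range, and the single boundary value $p^s=r^2(p^e-1)$ — which can only occur for $p=2$ — is settled by directly expanding $M^{p^s}$ and noting the surviving monomials still vanish. For the final assertion, when $p\geq r^2$ one has $p^{e+1}=p\cdot p^e\geq r^2p^e>r^2(p^e-1)$, so $s=e+1$ is an admissible choice and the exponent of $\RR_u(G)$ is at most $e+1$.

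I do not anticipate a serious obstacle here: the reduction to $M^{p^s}=0$ is immediate from the additivity of Frobenius on $\barB$, the relation $m^{p^e}=0$ is the same fact already recorded in the paper, and the rest is pure pigeonhole on walk lengths. The only point that needs a little care is the boundary case $p^s=r^2(p^e-1)$, where the pigeonhole estimate is not quite strict; but this forces $p=2$ and is disposed of by inspection, and in any case it plays no role in the "in particular" statement, which is the part used later.
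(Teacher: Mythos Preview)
Your proof is correct and is essentially the paper's: where you count walks of length $p^s$ and pigeonhole on the $r^2$ edges, the paper simply says that the entries of $M^{p^s}$ are homogeneous polynomials of degree $p^s$ in the $r^2$ entries of $M$ and pigeonholes on variables---the same argument in different language. You are in fact slightly more careful than the paper, both in justifying $m^{p^e}=0$ for $m\in\barm$ and in flagging the boundary case $p^s=r^2(p^e-1)$ (which the paper passes over in silence), though your ``by inspection'' for that case is not really a proof either.
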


\begin{proof}
As before, we are after a power $p^s$ which kills all  $r\times r$ matrices $M$ with entries in $\barm$.
Since the exponent is $e$, it is true that $m^{p^e} = 0$ for all $m\in \barm$.
Again observe that a power $M^{p^s}$ has entries which are homogeneous polynomials of degree $p^s$
in the $r^2$ entries of $M$.
If we take $p^s$ large enough so that any monomial of degree $p^s$ in $r^2$ variables must contain at least one of the variables at least $p^e$ times, then we know that $M^{p^s} = 0$.
The claimed bound now follows.
\end{proof}

\begin{remarks}
A few comments on this last result:
\begin{itemize}
\item[(i)] When $p=2$, $e=1$ and $r=2$ in Lemma \ref{lem:rankbound}, we obtain the bound $4 = p^{e+1}$ which was observed in Example \ref{ex:rankbound}(i). 
On the other hand, for larger $e$ we have to go up to $p^{e+2}$.

\item[(ii)] When $r=1$ in Lemma \ref{lem:rankbound}, we obtain the bound $p^e$ which was already observed in Example \ref{ex:rankbound}(ii).

\item[(iii)] The bound from Lemma \ref{lem:nbound} will obviously be much better in some cases than that provided by Lemma \ref{lem:rankbound}. The tension between these two observations seems to be something which merits further study, and we will return to it in future work.

\item[(iv)] Computer calculations suggest that if $G:=\R_{k'/k}(G')$ then the exponent of $\RR_u(G_{\bar k})$ is always the same as the exponent of $\RR_u(G_{\bar k})\cap B_{\bar k}$, where $B:=\RR_u(\R_{k'/k}(B'))$ for $B'=T'\ltimes U'$ a Borel subgroup of $G'$ with split maximal torus $T'$ (recall we assumed $k$ to be separably closed). We do not know a proof that these exponents are the same---assuming, of course, that it is even true. In any case, one can make a little more conceptual progress in obtaining an upper bound for the exponent of $\RR_u(G_{\bar k})\cap B_{\bar k}$, as follows.

The group $B$ is connected and solvable (since $B'$ is), so any unipotent subgroup of $B_{\bar k}$ is contained in $\RR_u(B_{\bar k})$.   In particular, $\RR_u(G_{\bar k})\cap B_{\bar k}$ is contained in $\RR_u(B_{\bar k})$.  We now give an upper bound for the exponent of the latter.  Observe that $B\cong C\ltimes U$, where $C:=\R_{k'/k}(T')$ and $U:=\R_{k'/k}(U')$.  Then $\RR_u(B_{\bar k})=\RR_u(C_{\bar k})\ltimes U_{\bar k}$. Now from Example~\ref{ex:rankbound}(ii) we get that the exponent of $\RR_u(C_{\bar k})$ is precisely $e$, where $e$ is the exponent of $k'/k$. Thus the $p^e$-power map sends $\RR_u(B_{\bar k})$ into $U_{\bar k}$; denote the image of this map by $J$. One knows that the exponent of $U'$ is the smallest integer $s$ such that  $p^s>h-1$, where $h$ is the Coxeter number of $G'$; see \cite[Order Formula 0.4]{Tes95}. Thus the $p^s$-power map on $U'$ factors through $1$, and so the $p^s$-power map on $U=\R_{k'/k}(U')$ also factors through $1$. Since $J\subseteq U_{\bar k}$, the exponent of $J$ is at most $s$. So one gets an upper bound of $e+s$ for the exponent of $\RR_u(B_{\bar k})$ (and hence for the exponent of $\RR_u(G_{\bar k})\cap B_{\bar k}$), though this bound can be further improved with knowledge of the structure of $k'$.

Elementary calculations show that when $G'=\GL_2$, we have $J=1$ when $k'/k$ is primitive, giving $e$ as the exponent, and $J\neq 1$ when $k'/k$ is imprimitive; since all elements of $J$ have order $p$, we then get $e+1$ as the exponent. To explain what happens in the imprimitive case, find $t\in k'$ with exponent $e$ in $k'/k$; since $k'$ is imprimitive, we may find also $u\in k'\setminus k(t)$. One checks \[\begin{pmatrix}1+1\otimes t-t\otimes 1 & 1\otimes u-u\otimes 1\\0 & 1\end{pmatrix}^{p^e}=\begin{pmatrix}1 & (1\otimes u-u\otimes 1)(1\otimes t-t\otimes 1)^{p^e-1}\\0 & 1\end{pmatrix}\neq 1.\] 
More generally we ask
\begin{question}\label{opq} Suppose $G'$ is not unusual and let $r=\log_p([k':k(k')^p])-1$. Let $s=\max\{\lceil\log_p(h_{L'}-1)\rceil\}$ where the maximum is over all Coxeter numbers $h_{L'}$ of Levi subgroups $L'$ of $G'$ with rank at most $r$. Is the exponent of $\RR_u(B_{\bar k})$ equal to $\max\{e+s,\lceil\log_p(h_{G'}-1)\rceil\}$?\end{question}

\end{itemize}
\end{remarks}

\begin{remark}
 Note that the bounds above fail in the context of a general standard pseudo-reductive group. For instance, given a finite purely inseparable field extension $k'/k$ with exponent $e$, we can take $G$ to be the standard group arising from the 4-tuple $(G',k'/k,T',C)$, where $G'= T'= 1$ and $C= \R_{k''/k}(\Gm)$, with $k''/k$ a finite purely inseparable field extension of arbitrarily large exponent $f$: for then $G\cong C$ has exponent $f$ by Example~\ref{ex:rankbound}(ii).
\end{remark}

\bigskip
{\bf Acknowledgements}:
Part of the research for this paper was carried out while the
authors were staying at the Mathematical Research Institute
Oberwolfach supported by the ``Research in Pairs'' programme.
The authors also acknowledge the financial support of EPSRC Grant EP/L005328/1 and Marsden Grant UOA1021.

We thank Brian Conrad for comments and discussion. We thank Gopal Prasad for some helpful hints to improve the results.  We are also grateful to the referee for their comments.

\providecommand{\bysame}{\leavevmode\hbox to3em{\hrulefill}\thinspace}
\providecommand{\MR}{\relax\ifhmode\unskip\space\fi MR }
\providecommand{\MRhref}[2]{%
  \href{http://www.ams.org/mathscinet-getitem?mr=#1}{#2}
}
\providecommand{\href}[2]{#2}

\end{document}